\newtheorem{theorem}{Theorem}
\newtheorem*{theorem*}{Theorem}
\newtheorem{definition}{Definition}
\newtheorem{corollary}[theorem]{Corollary}
\newtheorem{lemma}[theorem]{Lemma}
\theoremstyle{remark}
\newtheorem{remark}[theorem]{Remark}
\newcommand\CC{{\mathbb C}}
\newcommand\Ct{{\mathbb C}(t)}
\newcommand{\xmark}{\ding{55}}%
\DeclareMathOperator\Pic{Pic}
\DeclareMathOperator\Cox{Cox}
\DeclareMathOperator\Bl{Bl}
\DeclareMathOperator\Cr{Cr}
\DeclareMathOperator\Gr{Gr}
\DeclareMathOperator\Spec{Spec}
\DeclareMathOperator\reg{reg}
\DeclareMathOperator\ord{ord}
\DeclareMathOperator\infor{in}
\algrenewcommand{\algorithmiccomment}[1]{\hfill $\rhd$ \emph{#1}}
\algrenewcommand{\algorithmicrequire}{\textbf{Input:}}
\algrenewcommand{\algorithmicensure}{\textbf{Output:}}
\algnewcommand{\Or}{\textbf{or}}
\algnewcommand{\And}{\textbf{and}}
\algnewcommand{\Not}{\textbf{not}\,}
\algnewcommand\algorithmicforeach{\textbf{for each}}
\definecolor{amber}{rgb}{1.0, 0.75, 0.0}
\definecolor{brightube}{rgb}{0.82, 0.62, 0.91}
\title{Discrete geometry of Cox rings of blow-ups of $\mathbb{P}^3$  }
\author{Mara Belotti \and Marta Panizzut} 
\address[Mara Belotti, Marta Panizzut]{
  Technische Universität Berlin,
  Chair of Discrete Mathematics/Geometry \\
  \texttt{\{belotti,panizzut\}@math.tu-berlin.de}
}
\begin{document}
\maketitle
\begin{abstract} We prove quadratic generation for the ideal of the Cox ring of the blow-up of  $\mathbb{P}^3$ at $7$ points, solving a conjecture of Lesieutre and Park. To do this we compute Khovanskii bases, implementing techniques which proved successful in the case of Del Pezzo surfaces. Such bases give us degenerations to toric varieties whose associated polytopes encode toric degenerations with respect to all projective embeddings. We study the edge-graphs of these polytopes and we introduce the \textit{Mukai edge graph}.
\end{abstract}

\vspace{0.5cm}
{\small \textup{2020} \textit{Mathematics Subject Classification}. 
14M25, 14Q15, 14D06, 52B05}
\section{Introduction}

Let $X_{k,d}$ be the blow-up of $k$ points in $d$-dimensional projective space. In \cite{Mukai04}, Mukai introduces a symmetric bilinear form on the Picard group  $\mathbb{Z}^{k+1}$ defined on the standard basis $H,E_1,E_2,\dots,E_k$ as follows:
\begin{equation} \label{eq:Mukaiform}  (H,H)=d-1 \,\,\,\, (H,E_i)=0 \,\,\,\, (E_i,E_j)=-\delta_{i,j}.
\end{equation}
A divisor class $D$ in $\Pic(X_{k,d})$ has anticanonical degree one if $\frac{1}{d-1}(-K_{k,d} \, , \,D)=1$, where $K_{k,d}$ is the canonical divisor of $X_{k,d}$. We consider the graph whose vertices are divisors of anticanonical degree one, and two vertices are connected by an edge if their  Mukai  product is zero. The $(k,d)$--\emph{Mukai edge graph} is the connected component containing $E_1$.  In this paper we begin the investigation of this graph as a combinatorial object encoding properties of the Cox ring of  $X_{k,d}$. 

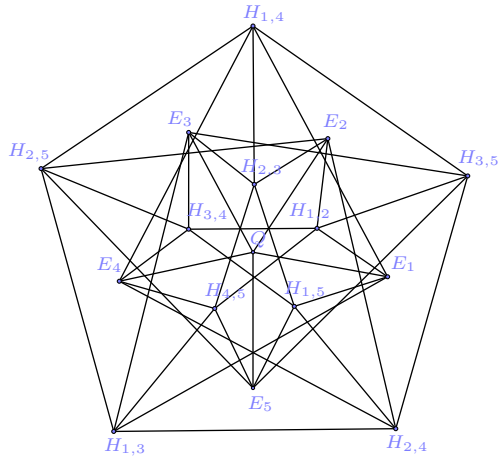
\begin{figure}[b]
\centering
\definecolor{xdxdff}{rgb}{0.49019607843137253,0.49019607843137253,1}
\begin{tikzpicture}[line cap=round, scale=0.3,line join=round,>=triangle 45,x=1cm,y=1cm]
\clip(-11,-10) rectangle (12,12);
\draw [line width=0.5pt] (-1.6787835177138586,-2.486299640158499)-- (0.056593700842840836,2.9994661446705666);
\draw [line width=0.5pt] (0.056593700842840836,2.9994661446705666)-- (1.8099704624455775,-2.392489691738408);
\draw [line width=0.5pt] (1.8099704624455775,-2.392489691738408)-- (-2.8216616990750865,1.0189333913326695);
\draw [line width=0.5pt] (-2.8216616990750865,1.0189333913326695)-- (2.8073119797726216,1.0578277025230167);
\draw [line width=0.5pt] (2.8073119797726216,1.0578277025230167)-- (-1.6787835177138586,-2.486299640158499);
\draw [line width=0.5pt] (-2.81957336327345,5.296225641824453)-- (0.056593700842840836,2.9994661446705666);
\draw [line width=0.5pt] (0.056593700842840836,2.9994661446705666)-- (3.2818785014231393,5.022875023519559);
\draw [line width=0.5pt] (3.2818785014231393,5.022875023519559)-- (2.8073119797726216,1.0578277025230167);
\draw [line width=0.5pt] (2.8073119797726216,1.0578277025230167)-- (5.901282997392197,-1.0839091219699954);
\draw [line width=0.5pt] (5.901282997392197,-1.0839091219699954)-- (1.8099704624455775,-2.392489691738408);
\draw [line width=0.5pt] (-2.8216616990750865,1.0189333913326695)-- (-2.81957336327345,5.296225641824453);
\draw [line width=0.5pt] (-2.8216616990750865,1.0189333913326695)-- (-5.861537129282881,-1.2815547128539626);
\draw [line width=0.5pt] (-5.861537129282881,-1.2815547128539626)-- (-1.6787835177138586,-2.486299640158499);
\draw [line width=0.5pt] (-1.6787835177138586,-2.486299640158499)-- (0,-6);
\draw [line width=0.5pt] (0,-6)-- (1.8099704624455775,-2.392489691738408);
\draw [line width=0.5pt] (-2.81957336327345,5.296225641824453)-- (0,0);
\draw [line width=0.5pt] (0,0)-- (3.2818785014231393,5.022875023519559);
\draw [line width=0.5pt] (0,0)-- (5.901282997392197,-1.0839091219699954);
\draw [line width=0.5pt] (0,0)-- (0,-6);
\draw [line width=0.5pt] (0,0)-- (-5.861537129282881,-1.2815547128539626);
\draw [line width=0.5pt] (-5.861537129282881,-1.2815547128539626)-- (0,10);
\draw [line width=0.5pt] (0,10)-- (5.901282997392197,-1.0839091219699954);
\draw [line width=0.5pt] (-6.104817881737462,-7.920302938071152)-- (-1.6787835177138586,-2.486299640158499);
\draw [line width=0.5pt] (-9.291568265803416,3.6968580121388066)-- (-6.104817881737462,-7.920302938071152);
\draw [line width=0.5pt] (-6.104817881737462,-7.920302938071152)-- (6.258832210404702,-7.7991678634326504);
\draw [line width=0.5pt] (-9.291568265803416,3.6968580121388066)-- (0,10);
\draw [line width=0.5pt] (0,10)-- (9.413965003886618,3.373019849570711);
\draw [line width=0.5pt] (9.413965003886618,3.373019849570711)-- (6.258832210404702,-7.7991678634326504);
\draw [line width=0.5pt] (1.8099704624455775,-2.392489691738408)-- (6.258832210404702,-7.7991678634326504);
\draw [line width=0.5pt] (2.8073119797726216,1.0578277025230167)-- (9.413965003886618,3.373019849570711);
\draw [line width=0.5pt] (0.056593700842840836,2.9994661446705666)-- (0,10);
\draw [line width=0.5pt] (-9.291568265803416,3.6968580121388066)-- (-2.8216616990750865,1.0189333913326695);
\draw [line width=0.5pt] (-9.291568265803416,3.6968580121388066)-- (3.2818785014231393,5.022875023519559);
\draw [line width=0.5pt] (6.258832210404702,-7.7991678634326504)-- (3.2818785014231393,5.022875023519559);
\draw [line width=0.5pt] (-2.81957336327345,5.296225641824453)-- (-6.104817881737462,-7.920302938071152);
\draw [line width=0.5pt] (-2.81957336327345,5.296225641824453)-- (9.413965003886618,3.373019849570711);
\draw [line width=0.5pt] (-5.861537129282881,-1.2815547128539626)-- (6.258832210404702,-7.7991678634326504);
\draw [line width=0.5pt] (0,-6)-- (-9.291568265803416,3.6968580121388066);
\draw [line width=0.5pt] (0,-6)-- (9.413965003886618,3.373019849570711);
\draw [line width=0.5pt] (-6.104817881737462,-7.920302938071152)-- (5.901282997392197,-1.0839091219699954);
\begin{scriptsize}
\draw [fill=xdxdff] (0,0) circle (2pt);
\draw[color=xdxdff] (0.235,0.58) node {$Q$};
\draw [fill=xdxdff] (0,10) circle (2.5pt);
\draw[color=xdxdff] (0.49,10.5) node {$H_{1,4}$};
\draw [fill=xdxdff] (0.056593700842840836,2.9994661446705666) circle (2.5pt);
\draw[color=xdxdff] (0.385,3.73) node {$H_{2,3}$};
\draw [fill=xdxdff] (-2.8216616990750865,1.0189333913326695) circle (2.5pt);
\draw[color=xdxdff] (-2,1.75) node {$H_{3,4}$};
\draw [fill=xdxdff] (2.8073119797726216,1.0578277025230167) circle (2.5pt);
\draw[color=xdxdff] (2.5,1.78) node {$H_{1,2}$};
\draw [fill=xdxdff] (1.8099704624455775,-2.392489691738408) circle (2.5pt);
\draw[color=xdxdff] (2.29,-1.67) node {$H_{1,5}$};
\draw [fill=xdxdff] (-1.6787835177138586,-2.486299640158499) circle (2.5pt);
\draw[color=xdxdff] (-1.19,-1.76) node {$H_{4,5}$};
\draw [fill=xdxdff] (-2.81957336327345,5.296225641824453) circle (2.5pt);
\draw[color=xdxdff] (-3.2,5.9) node {$E_3$};
\draw [fill=xdxdff] (3.2818785014231393,5.022875023519559) circle (2.5pt);
\draw[color=xdxdff] (3.625,5.77) node {$E_2$};
\draw [fill=xdxdff] (5.901282997392197,-1.0839091219699954) circle (2.5pt);
\draw[color=xdxdff] (6.6,-0.5) node {$E_1$};
\draw [fill=xdxdff] (0,-6) circle (2pt);
\draw[color=xdxdff] (0.325,-6.7) node {$E_5$};
\draw [fill=xdxdff] (-5.861537129282881,-1.2815547128539626) circle (2.5pt);
\draw[color=xdxdff] (-6.3,-0.56) node {$E_4$};
\draw [fill=xdxdff] (9.413965003886618,3.373019849570711) circle (2.5pt);
\draw[color=xdxdff] (9.91,4.12) node {$H_{3,5}$};
\draw [fill=xdxdff] (6.258832210404702,-7.7991678634326504) circle (2.5pt);
\draw[color=xdxdff] (6.76,-8.5) node {$H_{2,4}$};
\draw [fill=xdxdff] (-9.291568265803416,3.6968580121388066) circle (2.5pt);
\draw[color=xdxdff] (-9.8,4.42) node {$H_{2,5}$};
\draw [fill=xdxdff] (-6.104817881737462,-7.920302938071152) circle (2.5pt);
\draw[color=xdxdff] (-5.6,-8.6) node {$H_{1,3}$};
\end{scriptsize}
\end{tikzpicture}
\caption{The $(5,2)$--Mukai edge graph is the complement of the Clebsch graph represented here.}
\end{figure}

At the core of our analysis is the computation of toric degenerations of the rings $\Cox(X_{k,d})$. This approach has been initiated  by work of Sturmfles and Xu in  \cite{SturmfelsXu10}, where the quadratic generation of ideals of Cox rings of Del Pezzo surfaces $X_{k,2}$ is proven via degenerations to toric varieties using Khovanskii bases. Moreover, they use polytopes and polyhedral cones associated to the toric varieties to deduce  Ehrhart-type formulas for the multigraded Hilbert function of the Cox rings of $X_{k,2}$. 
Given a fixed number $k$ of points, the polytopes appearing from these toric degenerations are not always combinatorially equivalent. The authors focused on minimizing the number of their top dimensional faces and provide a classification for the case of five points in $\mathbb{P}^2$. This was not possible for higher numbers of points. In the case of the cubic surface, the polytope presented in \cite{SturmfelsXu10} already appeared in work of Todd \cite{Todd}, where he constructs polytopes associated to the general cubic surfaces which encodes the combinatorics of the lines.
 
The Mukai edge graph allows us to understand polytopes with minimal number of edges: for toric degenerations of $\Cox(X_{k,d})$ to normal varieties, polytopes having $(k,d)$-Mukai edge graph have minimal number of edges. In the case of the blow-up of six and  eight points in the plane, the vertex-edge graphs of the polytopes constructed in \cite{SturmfelsXu10} are Mukai edge graphs. Moreover, in the case of five points, the vertex-edge graph of the polytope which minimizes the number of facets among all the possible degeneration polytopes is the $(5,2)$--Mukai edge graph. We refer to Section \ref{sec:Mukai} and Proposition \ref{thm:MukaiGraph} for details.  

Khovanskii bases are an extremely important  computational tools in the study of toric degenerations, see for example \cite{KavehManon19}, \cite{BernalEtAl17}, \cite{DBGW20}, and they play a prominent role in our paper.  The Cox ring of  $X_{k,d}$ is finitely generated if and only if
\[\frac{1}{d+1}+\frac{1}{k-d-1}>\frac{1}{2},\]
as shown by Castravet and Tevelev  \cite{CastravetTevelev06} (we refer Table \ref{tab:fin_case} for a visual summary of the result). Moreover, since the conjecture posed by Batyrev and Popov in \cite{BatyrevPopov04}, a prominent question has been to  understand quadratic generation of the ideals of $\Cox(X_{k,d})$, see among others \cite{StillmanEtAl06}, \cite{ParkWon17}, \cite{LafaceVelasco09} and \cite{TestaEtAl09}.  There are only two finitely generated Cox rings  $\Cox(X_{k,d})$ for which quadratic generation was not yet proven: seven points in $\mathbb{P}^3$ and eight points in $\mathbb{P}^4$. We are interested in the first case, for which Lesieutre and Park showed that divisors of anticanonical degree one are explicit generators \cite{LesieutrePark17}. 

Our main result is given by the following theorem, solving a conjecture posed in \cite[Remark 4.4]{LesieutrePark17}.
\begin{theorem}\label{thm:main_res}
For generic choices of $7$ points in $\mathbb{P}(\mathbb{C}^4)$, the ideal $I_{7,3}$ of relations of $\Cox X_{7,3}$ is generated by quadrics.
\end{theorem}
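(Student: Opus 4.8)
The plan is to follow the strategy pioneered by Sturmfels and Xu \cite{SturmfelsXu10} for Del Pezzo surfaces: exhibit a finite \emph{Khovanskii basis} of $\Cox X_{7,3}$ consisting precisely of the degree-one generators found by Lesieutre and Park, use it to build a flat toric degeneration of $\Cox X_{7,3}$, and then transport quadratic generation from the combinatorially explicit toric ideal back up to $I_{7,3}$.

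First I would fix a concrete model of $\Cox X_{7,3}$. By \cite{LesieutrePark17} this ring is generated by the finite set $\mathcal{G}$ of divisor classes of anticanonical degree one (the exceptional divisors $E_i$, the lines $H-E_i-E_j$, and the remaining classes in that degree); realizing each such class by explicit multihomogeneous forms --- for instance after specializing the seven points to convenient coordinates over $\QQ$, or symbolically over the function field of the configuration space --- presents $\Cox X_{7,3}$ as a $\Pic(X_{7,3})=\ZZ^{8}$-graded subalgebra $R$ of a polynomial ring. Next I would choose a full-rank valuation $v$ on $R$, with values in $\ZZ^{11}$ (the Krull dimension of $R$) and compatible with the $\Pic$-grading --- e.g. a term order refining the grading, or a valuation built from a flag of subvarieties --- engineered so that the natural candidate $\mathcal{G}$ has a chance of being a Khovanskii basis.

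The central step is the computation: form the toric ideal of the initial forms $\{\mathrm{in}_v(g):g\in\mathcal{G}\}$ and verify, via the subduction (SAGBI) algorithm, that for each of its generators the corresponding element of $R$ subducts to zero modulo $\mathcal{G}$, so that $\mathcal{G}$ is indeed a Khovanskii basis for $v$. When this holds, the initial algebra $\mathrm{in}_v(R)$ is the affine semigroup algebra on the semigroup generated by $\{v(g):g\in\mathcal{G}\}$, the ring $R$ degenerates flatly to the corresponding normal toric variety, and by flatness $\mathrm{in}_v(I_{7,3})$ equals the toric ideal of that semigroup; its defining polytope is the degeneration polytope whose edge graph one identifies with the $(7,3)$-Mukai edge graph of the introduction. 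It then suffices to show that this toric ideal is generated by quadrics --- either combinatorially from the polytope (normality together with a two-step/covering argument, organized through the Mukai edge graph) or by a direct Gröbner-basis computation over $\QQ$ --- and to invoke the standard lifting lemma: if $\mathrm{in}_v(I_{7,3})$ is generated in degrees of anticanonical degree $\le 2$, then so is $I_{7,3}$. Finally, since the Hilbert function of $\Cox X_{7,3}$ is constant over the locus of generic configurations while the rank of the multiplication map $(I_{7,3})_{2}\otimes R_{m-2}\to (I_{7,3})_{m}$ is lower semicontinuous there, surjectivity established for one explicit configuration propagates to all generic ones (alternatively, a symbolic computation over the function field gives genericity directly).

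The real obstacle I expect is the Khovanskii-basis computation itself: $R$ has on the order of dozens of generators, sits in an $11$-dimensional, $\ZZ^{8}$-graded setting, and a poor choice of $v$ produces infinitely many new initial elements, so much of the work lies in choosing a valuation for which $\mathcal{G}$ is a \emph{finite} Khovanskii basis at all, and then certifying termination of subduction and the quadratic generation of the toric ideal rigorously rather than merely numerically. Ensuring the whole argument is uniform in --- or at least safely transferable to --- a generic point configuration is the secondary difficulty.
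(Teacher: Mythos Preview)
Your proposal is correct and follows essentially the same strategy as the paper: exhibit a Khovanskii basis consisting of the degree-one generators, degenerate to a toric ideal, check that ideal is quadratic, and lift; then pass to generic configurations by semicontinuity of the rank of the multiplication maps together with constancy of the Hilbert function.

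The one substantive difference is in \emph{how} the Khovanskii-basis property is verified. You propose running the subduction algorithm on the generators of the toric ideal. The paper instead uses a Hilbert-function criterion (its Lemma~\ref{lem:crit_khov}): one computes the toric ideal of the $129$ initial monomials (it has $4220$ quadratic generators spread over $883$ multidegrees), and then checks, multidegree by multidegree, that the dimension predicted by the toric ideal matches $\dim_{\mathbb C}\Cox(X_{7,3})_D$. Because the latter is Weyl-invariant, only \emph{three} Weyl orbits of multidegrees need to be checked, which is dramatically cheaper than subducing $4220$ binomials. The valuation is also more concrete than your general $\ZZ^{11}$-valued $v$: the paper works over $\mathbb C(t)$ with points whose coordinates are monomials $t^{a_{ij}}$ and uses the $t$-adic order, so ``choosing $v$'' becomes a random search over exponent matrices until the initial forms are distinct monomials with the right Hilbert function. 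Finally, the Mukai edge graph plays no role in the proof of quadratic generation in the paper; it enters only later, as a combinatorial feature of the degeneration polytope.
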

We achieved this by showing that polynomials defining the generators form a Khovanskii  basis for the algebra $\Cox(X_{7,3})$. This Khovanskii  basis gives us a toric degeneration of $\Cox(X_{7,3})$ and the associated polytope is $10$--dimensional. We can project it to the the $7$--dimensional polytope underlying the effective cone of $X_{7,3}$. The projective embedding of $X_{7,3}$ corresponding to a given very ample line bundle degenerates to the toric surface associated with the $3$--polytope in the fiber of the projection: we obtain \textit{simultaneous degenerations} of $X_{7,3}$. 

Moreover, we compute a degeneration polytope with $126$ vertices and $2016$ edges such that its edge graph is given by the $(7,3)$--Mukai edge graph. We remark that this realization minimizes the number of edges but not the number of facets. Our computations are implemented in the computer algebra software \textsc{Oscar} \cite{OSCAR} and are available at 
\begin{center}
\url{https://github.com/marabelotti/7PointsPP3}.
\end{center} 

The paper is organized as follows. In Section \ref{sec:background} we introduce our notation and give an overview of known results about the Cox rings of $X_{k,d}$.  Khovanskii bases are introduced in  Section \ref{sec:Khovanskii_basis}. We limit ourselves to  the definition we need in our context (also referred to as sagbi basis), which is  less general then the one given in \cite{KavehManon19}. In Section \ref{sec:quad_gen} we prove quadratic generation of the ideal of $\Cox(X_{7,3})$ for generic choices of seven points in $\mathbb{P}^3$.  The univariate Hilbert function of $\Cox(X_{7,3})$ and  Ehrhart-type formulas for the multigraded one are given in Section \ref{sec:Hilbert_function}. Finally, Section \ref{sec:Mukai} collects observations and results about the Mukai edge graph.

\smallskip 

\textbf{Acknowledgements.}
We wish to thank Bernd Sturmfels for introducing us to the subject and we are grateful to him and to Michael Joswig for useful finalising comments. We are grateful to Matteo Gallet and Elisa Postinghel for useful discussions related to this project. We also thank Ulrich Derenthal and Jinhyung Park for clarifications on their works.

Mara Belotti is founded by the Deutsche Forschungsgemeinschaft (DFG, German Research Foundation) – Project-ID 286237555 – TRR 195

\section{Preliminaries on Cox Rings}
\label{sec:background}

We collect known results about Cox rings of varieties obtained by blowing up points in projective space. The section is organized following standard steps behind the study of these mathematical objects. 

\subsection{Cox rings and their gradings} 
Let $\mathbb{P}^d:=\mathbb{P}(\mathbb{C}^{d+1})$ be the $d$--dimensional projective space over the complex numbers, and
\[X_{k,d}:=\textnormal{Bl}_k \mathbb{P}^d\]
the blow-up of $k$ generic points in  $\mathbb{P}^d$. The Picard group $\Pic(X_{k,d})$  is a free abelian group of rank $k+1$ generated by the classes of the proper transforms of a general hyperplane $H$ and of the exceptional divisors $E_1,\dots,E_k$.  

The \textbf{Cox ring} of $X_{k,d}$ is the $\Pic(X_{k,d})$--graded ring
\[\Cox(X_{k,d}):=\bigoplus_{(m_0,\dots,m_k)\in \mathbb{Z}^d} H^0(X_{k,d},m_0H_0+m_1E_1+\dots+m_kE_k),\]
where multiplication is given by the standard multiplication of sections. This ring encodes important informations of the variety $X_{k,d}$ and it can be regarded as a generalization of its homogeneous coordinate ring: it contains all global sections of line bundles up to linear equivalence, so it encodes all possible embeddings of $X_{k,d}$ in projective spaces. 

 We look at the $\Pic(X_{k,d})$-grading on $\Cox(X_{k,d})$ as a \underline{$\mathbb{Z}^{k+1}$-grading} through the following identification:
\[m_0H-\sum_{i=1}^km_iE_i\in\Pic(X_{k,d}) \iff [m_0,m_0-m_1,\dots,m_0-m_k]\in\mathbb{Z}^{k+1}.\] 
The ring also has a natural $\mathbb{Z}$-grading compatible with the multigrading: given a $\Pic(X_{k,d})$--homogeneous element $s\in H^0(X_{k,d},D)$, we define $\deg(s):=\frac{1}{d-1}(-K_{k,d},D)$. In the expression, $K_{k,d}$ is the canonical divisor on $X_{k,d}$, 
 \[K_{k,d}=-(d+1)H+(d-1)\sum_{i=1}^{k}E_i,\] 
and $(\,\cdot \,,\, \cdot\, )$ is the Mukai symmetric bilinear form (\ref{eq:Mukaiform}) introduced in \cite{Mukai04}. This grading is usually referred to  as \underline{anticanonical degree}.
 
Using the aforementioned identification of $\Pic(X_{k,d})$ with $\mathbb{Z}^{k+1}$, we look at the \textbf{Weyl group} generated by the elements of the symmetric group $S_k$ permuting the last $k$ entries and by the \underline{Cremona element}
\[
\begin{small}
\Cr([m_0,m_1,\dots,m_k])=[m_0',\, m_1,\dots,\,m_{d+1},\,m_0'-m_0+m_{d+2},\dots,\,m_0'-m_0+m_{k}],\end{small}
\]
where $m_0'=\sum_{i=1}^{d+1}m_i-m_0$. 
Geometrically, the action of $S_k$ permutes the order of the blown up points, while the Cremona element is induced by  the Cremona transformation of $\mathbb{P}^d$ based at the first $d+1$ points. 
The action of the Weyl group preserves the Mukai bilinear form and  the dimension of  global sections. 

\subsection{Finite generation} The Cox ring of a $\mathbb{Q}$-factorial normal projective variety is finitely generated if and only if the variety is a Mori dream space \cite{MR1786494}.
In our case, this translates to conditions on $k$ and $d$.

\begin{theorem}{\cite[Theorem 1.3]{CastravetTevelev06}}\label{thm:fin_case} The $\mathbb{C}$--algebra $\Cox(X_{k,d})$ is finitely generated if and only if 
\[\frac{1}{d+1}+\frac{1}{k-d-1}>\frac{1}{2}.\]
In this cases, the projective variety $Z_{k,d}:=\mathbb{P}(\Cox(X_{k,d}))$ with respect to the anticanonical grading is a locally factorial, Cohen--Macaulay and Gorenstein scheme with rational singularities. Moreover, the anticanonical class is $-K_{Z_{k,d}}=\mathcal{O}_{Z_{k,d}}(s)$, where 
\[s=2(d+1)(k-d-1)\left(\frac{1}{d+1}+\frac{1}{k-d-1}-\frac{1}{2}\right)>0.\]
\end{theorem}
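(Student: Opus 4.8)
The plan is to recast finite generation as a polyhedrality statement about the effective cone, to control that cone through the Weyl group action, and finally to read the structure of $Z_{k,d}$ off the ring-theoretic shape of $\Cox(X_{k,d})$. By Hu--Keel \cite{MR1786494} (quoted above), $\Cox(X_{k,d})$ is finitely generated if and only if $X_{k,d}$ is a Mori dream space, for which it is necessary that the pseudo-effective cone $\textnormal{Eff}(X_{k,d})\subset\Pic(X_{k,d})\otimes\RR$ be rational polyhedral. The Weyl group generated by $S_k$ and the Cremona element $\Cr$ preserves the Mukai form and the dimensions of spaces of global sections, hence acts on $\textnormal{Eff}(X_{k,d})$; as a Coxeter group it is the one attached to the $T$-shaped diagram $T_{2,\,d+1,\,k-d-1}$, and the elementary equivalence
\[\frac{1}{d+1}+\frac{1}{k-d-1}>\frac{1}{2}\ \Longleftrightarrow\ \frac{1}{2}+\frac{1}{d+1}+\frac{1}{k-d-1}>1\]
shows that the hypothesis of the theorem is exactly the condition for this group to be \emph{finite}, i.e.\ for the diagram to be of $ADE$ type.

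Suppose first that the Weyl group is infinite. Then the orbit of $E_k$ is infinite, and each class in it has Mukai self-product $-1$ and anticanonical degree $1$, is rigid, and therefore spans an extremal ray of $\textnormal{Eff}(X_{k,d})$; so the effective cone has infinitely many extremal rays, $X_{k,d}$ is not a Mori dream space, and $\Cox(X_{k,d})$ is not finitely generated. Conversely, when the Weyl group is finite, $\textnormal{Eff}(X_{k,d})$ is the orbit of a single rational simplicial cone, hence rational polyhedral with rays spanned by the finitely many $(-1)$-divisors; one then verifies the remaining Mori dream space conditions, equivalently that $\Cox(X_{k,d})$ is generated by its finitely many sections of anticanonical degree one. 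Since the pairs $(k,d)$ satisfying the inequality form a very short list, this step can be organized as an induction on $k$ --- deleting a point, or applying $\Cr$ to pass to a smaller configuration --- reducing to a handful of classically known base cases.

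For the structural assertions: $\Cox(X_{k,d})$ is a unique factorization domain because $\Pic(X_{k,d})$ is free of finite rank and $X_{k,d}$ is projective, so $Z_{k,d}=\mathbb{P}(\Cox(X_{k,d}))$ is locally factorial. Cohen--Macaulayness and rationality of the singularities follow from known positivity of these total coordinate spaces (once more over the finite list). Being a Cohen--Macaulay graded UFD, $\Cox(X_{k,d})$ has graded canonical module free of rank one, whence $Z_{k,d}$ is Gorenstein; evaluating the degree of a generator of this module against the anticanonical $\ZZ$-grading and the Mukai form yields $-K_{Z_{k,d}}=\mathcal{O}_{Z_{k,d}}(s)$ with
\[s=2(d+1)(k-d-1)\left(\frac{1}{d+1}+\frac{1}{k-d-1}-\frac{1}{2}\right)=2k-(d+1)(k-d-1),\]
which by the same equivalence as above is positive precisely in the finitely generated range.

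The genuinely hard point is finite generation itself: polyhedrality of $\textnormal{Eff}(X_{k,d})$ is necessary but, as Hilbert's fourteenth problem illustrates, far from sufficient, so one must still bound relations in all degrees and exclude infinitely generated behaviour --- which is why the proof ultimately leans on the smallness of the admissible list of $(k,d)$ and on explicit generator sets in the base cases; the Cohen--Macaulay and Gorenstein statements for $Z_{k,d}$ form a second, essentially independent, layer of work.
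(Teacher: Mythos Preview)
The paper does not prove this theorem: it is quoted as background from \cite{CastravetTevelev06} with no argument supplied, so there is no ``paper's own proof'' to compare against. Your sketch is an outline of the Castravet--Tevelev argument itself, and as such it is broadly faithful: the identification of the Weyl group with the Coxeter group of the $T_{2,\,d+1,\,k-d-1}$ diagram, the finiteness criterion $\tfrac{1}{2}+\tfrac{1}{d+1}+\tfrac{1}{k-d-1}>1$, and the infinite-orbit obstruction in the non-finitely-generated direction (due to Mukai) are all correct in shape, and you are honest that the converse direction is where the real work lies.

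One inaccuracy to flag: in the finite-Weyl-group case you write that one verifies the Mori dream space conditions, ``equivalently that $\Cox(X_{k,d})$ is generated by its finitely many sections of anticanonical degree one''. This is not an equivalence --- generation in anticanonical degree one is strictly stronger than finite generation, and for $(k,d)=(8,4)$ it is still only conjectural (cf.\ Table~\ref{tab:notonlyone} and the remark after Theorem~\ref{lem:antic_gen}). Castravet--Tevelev establish finite generation without settling that stronger statement in every case. Your treatment of the structural assertions (UFD $\Rightarrow$ locally factorial; Cohen--Macaulay and factorial $\Rightarrow$ Gorenstein; the computation $s=2k-(d+1)(k-d-1)$) is correct and in fact matches how the present paper itself re-derives the Gorenstein property in Section~\ref{sec:Hilbert_function}.
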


From now on, we will always assume $k\geq d+2$, since for $k\leq d+1$ the variety $X_{k,d}$ is a toric variety and its Cox ring is a polynomial ring.
Notice that for $d\geq 5$ we have finite generation only for $k\leq d+3$. We refer to Table~\ref{tab:fin_case}  for a visual summary of Theorem \ref{thm:fin_case}. 

\begin{center}
\begin{table}
\begin{tabular}{|c|c|c|c|c|c|c|}
\hline
\backslashbox{$d$}{$k$} & $d+2$ & $d+3$ & $d+4$ & $d+5$ & $d+6$\\
\hline
$2$ &  \checkmark & \checkmark & \checkmark & \checkmark & \checkmark \\
\hline
$3$ & \checkmark & \checkmark & \cellcolor{blue!25}\checkmark & &\\
\hline
$4$ &  \checkmark & \checkmark & \checkmark & &\\
\hline
$\geq 5$ & \checkmark & \checkmark &  & &\\
\hline
\end{tabular}
\vspace{0.5cm}
\caption{Cases in which the Cox ring is finitely generated.}
\label{tab:fin_case}
\end{table}
\end{center}

\subsection{Generators $G_{k,d}$} For every pair $(k,d)\neq (8,4)$, explicit generators of the Cox ring $\Cox(X_{k,d})$ have been described in \cite[Theorem 3.2]{BatyrevPopov04}, \cite[Theorem 1.2]{CastravetTevelev06},  \cite[Theorem 1.2]{LesieutrePark17}, giving the following theorem.

\begin{theorem} \label{lem:antic_gen} Let $(k,d)\neq (8,4)$ such that $\Cox(X_{k,d})$ is finitely generated. Sections of divisors of anticanonical degree one are generators for $\Cox(X_{k,d})$.
\end{theorem}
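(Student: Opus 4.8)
The statement collects results from \cite{BatyrevPopov04} (the case $d=2$), \cite{CastravetTevelev06} ($d\ge 3$ with $(k,d)\ne(7,3)$) and \cite{LesieutrePark17} (the case $(7,3)$); the plan is to describe the strategy these share and to indicate where the arguments must diverge. The first move is to set up an induction on the anticanonical degree, which is a nonnegative $\mathbb{Z}$-grading on $\Cox(X_{k,d})$ whose degree-zero part is $\mathbb{C}$ and whose degree-one part is, by construction, the finite direct sum $\bigoplus_{\deg D=1}H^0(X_{k,d},D)$. Hence it suffices to show that the subalgebra generated in anticanonical degrees $0$ and $1$ is all of $\Cox(X_{k,d})$, and, inducting on the degree, this reduces to proving that for every effective class $D$ with $\deg D\ge 2$ the multiplication map
\[
\bigoplus_{\substack{D=D_1+D_2\\ D_i\ \text{effective},\ D_i\ne 0}} H^0(X_{k,d},D_1)\otimes H^0(X_{k,d},D_2)\;\longrightarrow\;H^0(X_{k,d},D)
\]
is surjective, where the summands are indexed by decompositions with $1\le\deg D_i<\deg D$.

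The second move is to cut down the work by symmetry: the Weyl group generated by $S_k$ and the Cremona element $\Cr$ preserves the Mukai form, hence the anticanonical degree, and acts compatibly on global sections, so the displayed surjectivity need only be checked for one representative $D$ in each Weyl orbit of effective classes of each degree $\ge 2$. For $d=2$ this is the argument of Batyrev--Popov: the effective classes of anticanonical degree one are exactly the classes of $(-1)$-curves (and, when $k=8$, also $-K_{k,2}$), and for every effective $D$ with $\deg D\ge 2$ one produces a $(-1)$-curve $R$ with $D-R$ effective and $H^0(D-R)\cdot H^0(R)=H^0(D)$; the surjectivity is extracted from the conic/ruling fibrations on $X_{k,2}$ together with vanishing of the relevant $H^1$, the degree-one del Pezzo surface $k=8$ needing a separate argument because $-K_{k,2}$ is itself a generator.

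For $d\ge 3$, Theorem~\ref{thm:fin_case} leaves only $k\in\{d+2,d+3\}$; Castravet--Tevelev run the same template, the extra ingredients being an induction on the number of blown-up points (via restriction to sub-configurations) and the observation that, with so few points relative to the dimension, the effective cone of $X_{k,d}$ is small and combinatorially rigid, so the required multiplication maps can be verified directly along the induction --- Gale duality, replacing $X_{k,d}$ by a configuration of complementary dimension, makes these verifications essentially low-dimensional.

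The case $(k,d)=(7,3)$ escapes every uniform argument, and I would treat it exactly as Lesieutre--Park do: enumerate the finitely many generators of the effective cone of $X_{7,3}$ --- the classes of anticanonical degree one --- and verify by explicit computation, using genericity of the seven points to control the linear systems of forms of each degree on $\mathbb{P}^3$, that every effective class of larger anticanonical degree has its section space spanned by products of these. The pair $(8,4)$ is excluded precisely because this last computational step is not currently available for it. The real obstacle throughout is the surjectivity of the multiplication maps $H^0(D_1)\otimes H^0(D_2)\to H^0(D_1+D_2)$: it fails for special point configurations and is not formal even for generic ones, which is what forces the case analysis and makes $(7,3)$ the hard case --- and it is a refinement of precisely this $(7,3)$ analysis, now tracking relations and not only generators, that the present paper develops in order to obtain quadratic generation.
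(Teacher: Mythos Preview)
The paper does not prove this theorem; it is stated as a background result and attributed to \cite[Theorem~3.2]{BatyrevPopov04}, \cite[Theorem~1.2]{CastravetTevelev06}, and \cite[Theorem~1.2]{LesieutrePark17}, with no argument beyond the citations. Your proposal therefore goes further than the paper does, attempting to summarise the strategies of those references. The overall architecture you describe---induction on anticanonical degree, reduction to surjectivity of multiplication maps, and reduction to Weyl orbit representatives---is the correct shared template, and your attribution of the three cases to the three papers matches the paper's own attribution.

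One genuine slip: you write ``For $d\ge 3$, Theorem~\ref{thm:fin_case} leaves only $k\in\{d+2,d+3\}$.'' This is false as stated; for $d=3$ the inequality also allows $k=7$, and for $d=4$ it also allows $k=8$. These two ``extra'' cases are precisely $(7,3)$ and $(8,4)$, which is why they must be treated separately (and why the theorem excludes $(8,4)$). You do go on to handle $(7,3)$, so the coverage is complete, but the sentence as written misstates what Theorem~\ref{thm:fin_case} says and obscures why Castravet--Tevelev's argument does not already cover $(7,3)$. Beyond this, your descriptions of the three arguments are plausible high-level summaries rather than proofs; in particular the Lesieutre--Park argument for $(7,3)$ involves more than brute verification of multiplication maps, and your sketch would not by itself establish the result without substantial further input from those papers.
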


We denote the set of aforementioned sections with $G_{k,d}$. For $(k,d)\neq (8,2),(7,3)$, $(8,4)$, the multidegrees of the elements in $G_{k,d}$ are  exactly the elements in the orbit of $E_1$ under the Weyl action, see \cite{CastravetTevelev06}. 
For the other three cases, we refer to Table \ref{tab:notonlyone}. In  \cite[Remark 4.2]{LesieutrePark17}, it is conjectured that the theorem also holds  for $X_{8,4}:=\Bl_{8}\mathbb{P}^4$.

\begin{remark} The values in Table \ref{tab:notonlyone} for $(8,2)$ are taken from \cite{BatyrevPopov04}. For the cases $(7,3)$ and $(8,4)$, we compute the Weyl orbit of $E_1$ using \textsc{Oscar}. We know that the elements in this orbit generate the polyhedral cone of effective divisors with coefficients in $\mathbb{R}$, of which we can compute the Hilbert basis. Since the elements of this basis have anticanonical degree one, we found all.
For the dimension of the global sections we refer to \cite{MR3347179}.
\end{remark}

\begin{small}
\begin{center}
\begin{table}
\begin{tabular}{|c|c|c|c|c|}
\hline
$(k,d)$ &  Weyl representatives  &orbit size & $\dim_{\mathbb{C}}H^0(X_{k,d},D)$ & $|G_{k,d}|$ \tabularnewline [0.5ex] 
\hline
$(8,2)$  & $E_1$  & $240$  &  $1$  &  $242$\tabularnewline [0.5ex] 
& $-K_{8,2}$  & $1$ &  $2$ &\tabularnewline [0.5ex] 
\hline
\cellcolor{blue!25}$(7,3)$ & \cellcolor{blue!5}$E_1$ & \cellcolor{blue!5}$126$ &  \cellcolor{blue!5}$1$ & \cellcolor{blue!5}$129$\tabularnewline [0.5ex] 
\cellcolor{blue!25}& \cellcolor{blue!5}$-\frac{1}{2}K_{7,3}$  & \cellcolor{blue!5} $1$ &  \cellcolor{blue!5}$3$ & \cellcolor{blue!5} \tabularnewline [0.5ex] 
\hline
&  $E_1$   & $2160$ & $1$   & \tabularnewline [0.5ex] 
$(8,4)$& $2H-2E_1-\sum_{i=2}^8 E_i$  & $240$ &  $3$ & $2886$\tabularnewline [0.5ex] 
& $-K_{8,4}$  & $1$ &  $6$ & \tabularnewline [0.5ex] 
\hline
\end{tabular}
\vspace{0.5cm}
\caption{Generators of Cox rings.}
\label{tab:notonlyone}
\end{table}
\end{center}
\end{small}

\subsection{Ideal of relations $I_{k,d}$} We now  consider pairs $(k,d)\neq(8,4)$ for which $\Cox(X_{k,d})$ is finitely generated.  We have a surjection of graded ring 
\begin{equation}
\label{eq:func}\phi_{k,d}:\mathbb{C}[G_{k,d}]\to\Cox(X_{k,d})\end{equation}
where $\mathbb{C}[G_{k,d}]$ is the $\mathbb{Z}^{k+1}$--multigraded polynomial ring with variables indexed by elements of $G_{k,d}$. 
 Therefore, we have the following presentation 
\[\Cox(X_{k,d})\simeq \frac{\mathbb{C}[G_{k,d}]}{I_{k,d}},\]
where $I_{k,d}$ is the ideal of relations between sections of anticanonical degree one. 

\begin{theorem}[\cite{LafaceVelasco09},\cite{TestaEtAl09},\cite{SturmfelsXu10}] For $(k,d)\neq (8,4),(7,3)$ such that $\frac{1}{d+1}+\frac{1}{k-d-1}>\frac{1}{2}$, the ideal of relations $I_{k,d}$ is generated by quadrics $(I_{k,d})_2$ .
\end{theorem}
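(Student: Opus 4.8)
The plan is to deduce quadratic generation from a toric degeneration, by the Khovanskii-basis method that also underlies Theorem~\ref{thm:main_res}. Since $X_{k,d}$ is a Mori dream space, $\Cox(X_{k,d})$ is Noetherian and the presentation ideal $I_{k,d}=\ker\phi_{k,d}$ is finitely generated; by Theorem~\ref{lem:antic_gen} the anticanonical degree-one sections $G_{k,d}$ generate $\Cox(X_{k,d})$, so a quadric is a homogeneous relation of anticanonical degree $2$, and the goal is to bound by $2$ the anticanonical degrees of a minimal generating set of $I_{k,d}$. Fixing generic points and explicit polynomial representatives for the sections in $G_{k,d}$ inside an ambient polynomial ring, one chooses a weight vector $w$ whose induced valuation sends each $g\in G_{k,d}$ to a monomial initial form, say with exponent vector $v_g$. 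If the sections $g\in G_{k,d}$ form a Khovanskii basis for this valuation, then $\Cox(X_{k,d})$ degenerates to the affine semigroup algebra $\CC[S_w]$, where $S_w=\langle v_g : g\in G_{k,d}\rangle$, and it is standard (see \cite{KavehManon19}) that the corresponding initial ideal $\operatorname{in}_w(I_{k,d})\subseteq\CC[G_{k,d}]$ equals the toric ideal $I_{S_w}=\ker(\CC[G_{k,d}]\to\CC[S_w])$.

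The argument is then closed by the usual lifting principle: passing to an initial ideal can only increase graded Betti numbers, so the maximal degree of a minimal generator of $I_{k,d}$ is at most that of $\operatorname{in}_w(I_{k,d})=I_{S_w}$; in particular, if $I_{S_w}$ is generated by quadratic binomials, then $I_{k,d}$ is generated in anticanonical degree at most $2$. Thus, for each relevant pair $(k,d)$, the theorem reduces to two finite verifications: (i) that $G_{k,d}$ is a Khovanskii basis for a suitable $w$, which is certified by subduction --- a generating set of binomials of the candidate $I_{S_w}$ must subduce to $0$; and (ii) that $I_{S_w}$ is generated by quadrics, which is a toric-ideal computation.

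For $d=2$ and $4\le k\le 8$, these two verifications are carried out in \cite{SturmfelsXu10} through degenerations of $\Cox(X_{k,2})$ to toric varieties, with $S_w$ realized as the set of lattice points of an explicit lattice polytope; the smallest cases also follow from classical descriptions, e.g.\ $\Cox(X_{4,2})\cong\CC[\Gr(2,5)]$ with the quadratic Pl\"ucker relations. For $d\ge 3$ and $(k,d)\neq(7,3),(8,4)$, the only finitely generated cases are $(d+2,d)$ and $(d+3,d)$; their Cox rings have particularly transparent generators (cf.\ \cite{CastravetTevelev06}), and the same toric-degeneration template yields quadratic generation of the relations --- essentially the content of \cite{LafaceVelasco09} and \cite{TestaEtAl09}, and precisely the strategy we push through below for the outstanding pair $(7,3)$.

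The main obstacle is step~(i). A generic weight $w$ will \emph{not} make $G_{k,d}$ a Khovanskii basis: one must produce the correct degeneration, whose semigroup $S_w$ is the set of lattice points of a polytope mirroring the incidence combinatorics of the $(-1)$-classes --- a ``degeneration polytope'' in the sense of the Introduction --- and then certify the Khovanskii property, that is, that subduction terminates and returns $0$ on a generating set of binomials of $I_{S_w}$. This is the computationally and conceptually heaviest step, and it is exactly what defeats a by-hand treatment for the two remaining cases $(7,3)$ and $(8,4)$; in this paper we carry it out for $(7,3)$ with the computer algebra system \textsc{Oscar} \cite{OSCAR}. Step~(ii) is then a large but routine finite computation, and since both (i) and (ii) are Zariski-open conditions on the configuration of blown-up points, checking them at a single sufficiently generic configuration yields the assertion for the blow-up of generic points.
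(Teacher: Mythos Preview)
This theorem is a citation result: the paper does not prove it but only summarizes the literature in the paragraph following the statement. So there is no ``paper's own proof'' to match; what matters is whether your summary of the literature is accurate and whether your argument sketch is self-contained. On both counts there are problems.

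First, your attribution of methods is off. You write that for $d\ge 3$ the cases $(d+2,d)$ and $(d+3,d)$ are ``essentially the content of \cite{LafaceVelasco09} and \cite{TestaEtAl09}''. They are not: those two references treat the Del~Pezzo case $d=2$ by a purely theoretical Betti-table argument (showing the second column of the multigraded Betti table has its only nonzero entry at $b_{1,2}$), not by Khovanskii bases. The case $k=d+2$ is handled classically via the isomorphism $\Cox(X_{d+2,d})\cong\CC[\Gr(2,d+3)]$ and the quadratic Pl\"ucker relations, and the case $k=d+3$ (for all $d$) is the one done by Sturmfels--Xu via sagbi/Khovanskii bases. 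So your narrative reverses which papers used which method.

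Second, your text is a plan rather than a proof: you reduce the statement to ``two finite verifications'' (Khovanskii-basis certification and quadratic generation of the toric ideal) but never carry them out or point to where they are carried out for each pair $(k,d)$. For a citation theorem that is acceptable only if the pointers are correct, which brings us back to the first issue.

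Finally, the last paragraph drifts into the pair $(7,3)$, which is explicitly excluded from the present statement; that material belongs to Theorem~\ref{thm:main_res}, not here. If you want a clean write-up, mirror the paper's own summary: $k=d+2$ via the Grassmannian; $d=2$ via \cite{LafaceVelasco09,TestaEtAl09} (theoretical) or alternatively \cite{SturmfelsXu10} (sagbi); and $k=d+3$ via \cite{SturmfelsXu10}.
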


This theorem is a bundle of results by different authors, all proven using very different approaches that we will now summarize. For $k=d+2$ the Cox ring $\Cox(X_{k,d})$ is isomorphic to the homogeneous coordinate ring of the Grassmannian $\Gr(2,d+3)$ defined by the quadratic Pl\"ucker relations. When $d=2$, $X_{k,2}$ are Del Pezzo surfaces. In the paper \cite{StillmanEtAl06},  the authors prove quadratic generation for $k = 4,5$ and $6$ finding Gröbner bases invariant under certain monomial actions. This approach  involves a search for the right term order and it becomes computationally challenging for the blow-up of seven and eight points. The complete $d=2$ case is considered in \cite{LafaceVelasco09}, \cite{TestaEtAl09} where it is shown,  using purely theoretical arguments, that the second column of the multigraded Betti table has non-zero entry only for $b_{1,2}$.  In \cite{SturmfelsXu10}, the authors considered a different computational approach based on sagbi bases and proved quadratic generation for the complete case $d=2$,  and for $k=d+3$.

We are interested in the following conjecture, presented in \cite[Remark 4.4]{LesieutrePark17}: The ideal of relations $I_{7,3}$ is generated by quadrics $(I_{7,3})_2$.
The proof of this result will follow from Theorem \ref{theo:quadric_t}. 

\subsection{Hilbert functions of $Z_{k,d}$} In \cite{ParkWon17}, Park and Won compute the Hilbert functions of $\Cox(X_{k,2})$ as a graded ring with respect to the anticanonical grading.

\begin{theorem}[\cite{ParkWon17}] \label{thm:Hilb_func_3} The univariate Hilbert functions of $\Cox(X_{k,2})$ for $4 \leq k\leq 8$ are given in Table \ref{tab:Hilb_2}.
\end{theorem}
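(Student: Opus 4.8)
The plan is to read the univariate Hilbert function off the global structure of the section ring, reducing everything to a finite low-degree computation on the surface. Write $H(n):=\dim_{\CC}\Cox(X_{k,2})_n$ for the dimension of the anticanonical-degree-$n$ graded piece. Since for $d=2$ the anticanonical degree of $D$ is $-K_{k,2}\cdot D$, the $\ZZ$-grading gives
\[
H(n)=\sum_{-K_{k,2}\cdot D=n}\dim_{\CC}H^0(X_{k,2},D),
\]
the sum running over effective classes $D\in\Pic(X_{k,2})$. For $4\le k\le 8$ the surface is del Pezzo, so $-K_{k,2}$ is ample and the effective cone is rational polyhedral; hence for each fixed $n$ this is a \emph{finite} sum over the lattice points of a polytope.

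First I would fix the shape of the generating series. By Theorem \ref{thm:fin_case} the homogeneous coordinate ring of $Z_{k,2}$ is Cohen--Macaulay and Gorenstein, and specializing $s=2(d+1)(k-d-1)\bigl(\tfrac{1}{d+1}+\tfrac{1}{k-d-1}-\tfrac12\bigr)$ to $d=2$ yields $-K_{Z_{k,2}}=\mathcal{O}(9-k)$. Because $\Cox(X_{k,2})$ is generated in anticanonical degree one (Theorem \ref{lem:antic_gen}), it is a standard graded Gorenstein $\CC$-algebra of Krull dimension $k+3$, so
\[
\sum_{n\ge 0}H(n)\,t^n=\frac{h(t)}{(1-t)^{k+3}},
\]
with $h(t)=\sum_i h_i t^i$ the $h$-polynomial. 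Since $\omega_{Z_{k,2}}=\mathcal{O}(k-9)$, the $a$-invariant equals $k-9$, which forces $\deg h=(k-9)+(k+3)=2k-6$ and the palindromy $h_i=h_{2k-6-i}$ coming from Gorenstein duality.

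Second, I would compute the initial coefficients by hand. Expanding the series gives $H(n)=\sum_i h_i\binom{n-i+k+2}{k+2}$, so $h_0=H(0)=1$ and inductively $h_j$ is determined by $H(0),\dots,H(j)$. It therefore suffices to evaluate $H(0),H(1),\dots,H(k-3)$ directly, i.e. below the palindromy center $k-3$. Here $H(1)=|G_{k,2}|$ is the number of degree-one generators recorded around Table \ref{tab:notonlyone} (e.g. $242$ for $k=8$, matching $240\cdot 1+1\cdot 2$), and each further $H(n)$ is a finite enumeration: list the effective classes of anticanonical degree $n$ and compute each $\dim H^0(D)$ via its Zariski decomposition $D=P+N$ using $\dim H^0(D)=\dim H^0(P)=1+\tfrac12\bigl(P^2-K_{k,2}\cdot P\bigr)$, which is Riemann--Roch together with Kawamata--Viehweg vanishing (valid because $P$ nef and $-K_{k,2}$ ample make $P-K_{k,2}$ big and nef). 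Palindromy then recovers $h_{k-2},\dots,h_{2k-6}$ from $h_0,\dots,h_{k-3}$, and re-expanding $h(t)/(1-t)^{k+3}$ produces a closed formula for $H(n)$ valid for all $n\ge 0$, to be matched against Table \ref{tab:Hilb_2}.

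The main obstacle is the second step for the largest cases: for $k=7,8$ one must enumerate all effective classes of anticanonical degree up to $k-3\in\{4,5\}$ and correctly evaluate $h^0$ on the \emph{non-nef} classes, where the negative part $N$ (a configuration of $(-1)$-curves) of the Zariski decomposition has to be identified; keeping this enumeration honest, together with rigorously justifying the palindromy and the exact $a$-invariant from the Gorenstein structure, is where the real work lies. As an independent cross-check I would run the route suggested by the toric degenerations of Sturmfels--Xu: a Khovanskii (sagbi) basis degenerates $\Cox(X_{k,2})$ flatly to a toric algebra without changing the Hilbert function, so $H(n)$ must equal the Ehrhart function of the associated polytope, and the two computations have to agree.
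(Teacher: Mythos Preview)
The paper does not supply its own proof of this statement: Theorem~\ref{thm:Hilb_func_3} is simply quoted from \cite{ParkWon17}. Your outline is nonetheless correct, and---judging from how Section~\ref{sec:Hilbert_function} adapts the Park--Won argument to $X_{k,3}$---it matches that method closely. There the authors use the Gorenstein property and the vanishing in Corollary~\ref{cor:propHilbfunc} to pin down the shape of the series, and Table~\ref{tab:degrees23_6} is precisely the per-multidegree enumeration of $\dim H^0$ (organised by Weyl orbits, with a dimension formula for linear systems through multiple points) that your step~2 proposes. Your numerics check out for $d=2$: Krull dimension $k+3$, $a$-invariant $k-9$, $\deg h=2k-6$, so palindromy reduces everything to $H(0),\dots,H(k-3)$; and since $2k-6\le k+2$ for $k\le 8$, the polynomial $\sum_i h_i\binom{n-i+k+2}{k+2}$ really does agree with $H(n)$ for every $n\ge 0$, which is why Table~\ref{tab:Hilb_2} can list genuine polynomials.

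One difference of emphasis is worth flagging. What you relegate to a cross-check---degenerating via a Khovanskii basis and reading off the Ehrhart polynomial of the associated polytope---is the paper's \emph{primary} route to the Hilbert polynomial in its new case $\Cox(X_{7,3})$ (Theorem~\ref{theo:hil_pol}); the Park--Won style vanishing argument is then invoked only to upgrade ``Hilbert polynomial'' to ``Hilbert function''. So both of your ingredients appear in the surrounding text, but with the roles of main computation and sanity check reversed.
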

\begin{center}
\begin{table}[h]
\begin{tabular}{|c|c|}
\hline
$k$ & \text{Hilbert functions} \\
\hline
$4$ & \begin{small}${\begin{aligned}\frac{1}{6!}(5t^6 + 75t^5 + 455t^4 + 1425t^3 + 2420t^2 + 2100t + 720) \end{aligned}}$\end{small} \tabularnewline [1.5ex] 
\hline
$5$ & \begin{small} ${\begin{aligned}\frac{1}{7!}(34t^7 + 476t^6 + 2884t^5 + 9800t^4 + 20146t^3 + 25004t^2 + 17256t + 5040) \end{aligned}}$ \end{small} \tabularnewline [1.5ex] 
\hline
$6$ & \begin{small}${\begin{aligned}\frac{1}{8!}(372t^8 + 4464t^7 + & 25200t^6 + 86184t^5+193788t^4\\ + 291816t^3 + &284640t^2 + 161856t + 40320)  \end{aligned}}$ \end{small} \tabularnewline [0.5ex] 
\hline
$7$ & \begin{small}${\begin{aligned} \frac{1}{9!}(9504t^9 + 85536t^8 + 412992t^7 &+ 1294272t^6 + 2860704t^5\\ + 4554144t^4+5125248t^3 + &3863808t^2 + 1752192t + 362880)  \end{aligned}}$\end{small} \tabularnewline [0.5ex] 
\hline
$8$ & \begin{small}${\begin{aligned} \frac{1}{10!}(1779840t^{10} + 8899200t^9 &+ 32140800t^8 + 75168000t^7  + 137531520t^6+\\ 186883200t^5 + 191635200t^4 &+ 141696000t^3 + 74183040t^2 + 24624000t + 3628800) \end{aligned}}$\end{small} \tabularnewline [0.5ex] 
\hline
\end{tabular}
\vspace{0.5cm}
\caption{Hilbert functions of $\Cox(X_{k,2})$}
\label{tab:Hilb_2}
\end{table}
\end{center}
\vspace{-.5cm}
The multigraded Hilbert functions have been described in  \cite{SturmfelsXu10} with a combinatorial flavour. In particular, the multivariate functions are presented in terms of Ehrhart type formulas, i.e,  values are described as the number of lattice points inside prescribed polyhedra, and for $\Cox(X_{d+3,d})$ in terms of decorations of phylogenetic trees. We will get back to this in Section \ref{sec:Hilbert_function}.

\section{Khovanskii bases of Cox Rings}
\label{sec:Khovanskii_basis}
In this section, we follow the notation of \cite{SturmfelsXu10} and consider $(k,d)\neq (8,4)$ giving finite generation.  We work over $\mathbb{C}(t)$, as we want to use results and constructions from tropical geometry \cite{StuMac15}. Intuitively, this corresponds to consider families of points.

Let $P=(p_1,\dots,p_k)$ be the $k\times (d+1)$ matrix whose columns are points in $\mathbb{P}(\mathbb{C}(t)^{d+1})$. 
For every divisor $D:=m_0H-\sum_{i=1}^k m_iE_i$ with $m_0>0$ of anticanonical degree $1$, we consider $h^0(D):=\dim_{\mathbb{C}} H^0(X_{k,d},D)$ variables $x_{D,i}$.  Let $\Ct[G_{k,d}]$ be the polynomial ring with these variables.  Moreover,  we choose a basis $f_1^D,\dots,f_{h^0(D)}^D$ for the linear system of hypersurfaces of degree $m_0$ passing through the point $p_i$ with multiplicity $m_i$. We say that the points $p_1,\dots,p_k$ are in \textit{general position} if all the polynomials $f_i^D$  are non zero. This is a stronger condition than requiring $p_1,\dots,p_k$ to be generic. We define the $\Ct$-algebra map
\begin{align*}
\phi_{k,d}(t):\mathbb{C}(t)[G_{k,d}]&\to \mathbb{C}(t)[x_1,\dots,x_k,y_1,\dots,y_k]\\
x_{E_i}^i &\to x_i \\
x_{D,i}&\to f_i^D\left(P\cdot\frac{y}{x}\right)x_1^{m_0-m_1}\cdots x_r^{m_0-m_r}.
\end{align*}
where $\frac{y}{x}$ is the vector $(\frac{y_1}{x_1},..,\frac{y_k}{x_k})$.

When the points have coordinates in $\mathbb{C}$, we consider the map above as a $\mathbb{C}$--algebra morphism, $\phi_{k,d}:\mathbb{C}[G_{k,d}] \to \mathbb{C}[x_1,\dots,x_k,y_1,\dots,y_k]$. 
This might look as an abuse of notation considering the map (\ref{eq:func}) defined in the previous section, but Theorem \ref{lem:antic_gen} tells us that for points in general position, the two maps are the same under the isomorphism \[\textnormal{im}(\phi_{k,d})\cong \Cox(X_{k,d})\] described in \cite[Corollary 2.14]{SturmfelsXu10}.

Let us make all of this more explicit for the case  we are most interested in: $(k,d)=(7,3)$. For simplicity of notation, we define the map 
\begin{align*}
\mathcal{S}:\mathbb{C}(t)[z_0,z_1,z_2,z_3]\times \Pic(X_{7,3})&\to \mathbb{C}(t)[x_1,\dots,z_7,y_1,\dots,y_7]\\
\mathcal{S}(f(z_0,z_1,z_2,z_3),m_0H-\sum_{i=1}^k m_iE_i)&\to f\left(P\cdot\frac{y}{x}\right)x_1^{m_0-m_1}\cdots x_7^{m_0-m_7}.
\end{align*}
For $D$ and $f_i^D$ defined as above, we have that $\phi_{7,3}(x_{D,i}) = \mathcal{S}(f_i^D,D)$. 
\begin{lemma} \label{lem:gen} Given  a choice of seven points $P=(p_1,p_2,p_3,p_4,p_5,p_6,p_7)$ in $\mathbb{P}(\mathbb{C}(t)^4)$, the $129$ images of the elements in $G_{k,d}$ under $\phi_{7,3}(t)$ are given by the list below. 
\begin{enumerate}
\item[\fbox{$E_i$}] $7$ variables $x_1,x_2,x_3,x_4,x_5,x_6,x_7$.
\item[\fbox{$H_{ijk}$}] $35$ elements defined by \underline{hyperplanes}:
\[\mathcal{S}\left(h_{(i,j,k)},H-E_i-E_j-E_k\right),\]
where $h_{(i,j,k)}$ is an hyperplane passing through the points $p_i,p_j,p_k$, for every distinct $i,j,k\in [7]$.
\item[\fbox{$Q_{ij}$}] $42$ elements defined by \underline{quadrics}: 
\[\mathcal{S}\left(q_{(i,j)},2H-\sum_{s\neq i,j}E_s-2E_j\right),\]
where $q_{(i,j)}$ is a quadric passing through every point except for $p_i$ and with multiplicity two at $p_j$, for every distinct $i,j\in [7]$.
\item[\fbox{$C_{ijk}$}] $35$ elements defined by \underline{cubics}:
\[\mathcal{S}\left(c_{(i,j,k)},3H-E_i-E_j-E_k-2\sum_{s\neq i,j,k}E_s\right),\]
where $c_{(i,j,k)}$ is a cubic passing with multiplicity one at $p_i,p_j,p_k$ and with multiplicity two at the other points, for every distinct $i,j,k\in [7]$.
\item[\fbox{$Qr_{i}$}] $7$ elements defined by \underline{quartics}:
\[\mathcal{S}\left(qr_{i},4H-3E_i-2\sum_{s\neq i}E_s\right)\]
where $qr_{i}$ is a quartic passing through all points with multiplicity two and with multiplicity $3$ at $p_i$, for every $i\in [7]$.
\item[\fbox{$S_i$}] $3$ elements defined by  \underline{sections of ${-K_{7,3}}/{2}$}:
\[\mathcal{S}\left(s_1,2H-\sum_{k}E_k\right) \ \ \mathcal{S}\left(s_2,2H-\sum_{k}E_k\right) \ \ \mathcal{S}\left(s_3,2H-\sum_{k}E_k\right),\]
where $s_1,s_2$ and $s_3$ are $3$ independent quadrics passing through all seven points.
\end{enumerate}
\end{lemma}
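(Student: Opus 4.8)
The plan is to obtain Lemma~\ref{lem:gen} from Theorem~\ref{lem:antic_gen}, the invariance of $\dim_{\mathbb{C}}H^0(X_{7,3},\cdot)$ under the Weyl action, and the standard dictionary between sections of $m_0H-\sum_i m_iE_i$ on the blow-up and forms of degree $m_0$ on $\mathbb{P}^3$ with prescribed vanishing orders at the $p_i$. Concretely I would proceed in three steps: first pin down the multidegrees occurring in $G_{7,3}$; then compute the dimensions $h^0(D)$; and finally translate each divisor class into the corresponding linear system of hypersurfaces and apply the definition of $\phi_{7,3}(t)$.

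\textbf{Multidegrees.} By Theorem~\ref{lem:antic_gen}, $G_{7,3}$ is the union, over divisor classes $D$ of anticanonical degree one admitting a nonzero section, of a chosen basis of $H^0(X_{7,3},D)$. By Table~\ref{tab:notonlyone} and the Remark following it, these classes are exactly the $126$ elements of the Weyl orbit $W\!\cdot\!E_1$ together with the class $-\tfrac12K_{7,3}=2H-\sum_{i=1}^7E_i$. I would compute $W\!\cdot\!E_1$ by starting from $E_1$ and repeatedly applying the generators of $W$ — the transpositions of $E_1,\dots,E_7$ and the Cremona element $\Cr$ based at $p_1,\dots,p_4$ — until the set stabilizes; this finite computation (carried out in \textsc{Oscar}, and checkable by hand on one representative per family) shows $W\!\cdot\!E_1$ is the disjoint union of the five families $E_i$, $\,H-E_i-E_j-E_k$, $\,2H-\sum_{s\neq i,j}E_s-2E_j$, $\,3H-E_i-E_j-E_k-2\sum_{s\neq i,j,k}E_s$, $\,4H-3E_i-2\sum_{s\neq i}E_s$, of respective sizes $7$, $\binom{7}{3}=35$, $7\cdot 6=42$, $\binom{7}{3}=35$, $7$, which indeed sum to $126$.

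\textbf{Dimensions and the map.} Since the $W$-action preserves $\dim_{\mathbb{C}}H^0(X_{7,3},\cdot)$ and $h^0(E_1)=1$, every class in $W\!\cdot\!E_1$ has a one-dimensional space of sections, while $h^0\bigl(2H-\sum_iE_i\bigr)$ is the dimension of the space of quadrics in $\mathbb{P}^3$ through $7$ general points, namely $10-7=3$; hence $|G_{7,3}|=126+3=129$. For a class $D=m_0H-\sum_im_iE_i$ with $m_0>0$ and all $m_i\geq 0$, the space $H^0(X_{7,3},D)$ is canonically the space of degree-$m_0$ forms in $z_0,\dots,z_3$ vanishing to order $\geq m_i$ at $p_i$, and under the isomorphism $\mathrm{im}(\phi_{7,3}(t))\cong\Cox(X_{7,3})$ of \cite[Corollary~2.14]{SturmfelsXu10} a basis element $x_{D,i}$ maps to $\phi_{7,3}(t)(x_{D,i})=\mathcal S(f_i^D,D)$, with $f_i^D$ the corresponding form. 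Reading off the tuple $(m_0;m_1,\dots,m_7)$ for the four families with $m_0>0$ gives precisely: a hyperplane through $p_i,p_j,p_k$; a quadric through the five points $p_s$ ($s\neq i,j$) with a double point at $p_j$; a cubic simple at $p_i,p_j,p_k$ and double at the other four points; and a quartic triple at $p_i$ and double at the other six — each linear system being zero-dimensional projectively by the dimension count above — together with the three-dimensional system of quadrics through all seven points for $-\tfrac12K_{7,3}$. Finally, for $D=E_i$ (where $m_0=0$) the definition of $\phi_{7,3}(t)$ sends $x_{E_i}^i$ to $x_i$. Assembling these descriptions produces exactly the list of $7+35+42+35+7+3=129$ images in the statement.

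\textbf{On the difficulty.} There is no genuine obstacle here: the substance is the finite orbit computation of the first step and the routine translation of divisor classes into vanishing data for forms on $\mathbb{P}^3$. Genericity of $p_1,\dots,p_7$ enters only to ensure that the imposed multiplicity conditions are independent — so that the dimension counts above are exact and the forms $f_i^D$ are nonzero — which is precisely the \emph{general position} hypothesis in the statement; granting this, identifying the $129$ images is pure bookkeeping.
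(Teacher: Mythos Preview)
Your argument is correct and matches the paper's approach: the paper does not give a separate proof of Lemma~\ref{lem:gen} because it is precisely the unpacking you describe --- the Weyl orbit decomposition and dimension counts are recorded in Table~\ref{tab:notonlyone} (with the Remark following it explaining how they were obtained in \textsc{Oscar}), and the lemma then just reads off the images via the definitions of $\phi_{7,3}(t)$ and $\mathcal S$. Your three-step breakdown (orbit, dimensions, translation to linear systems) is exactly the implicit reasoning behind the statement.
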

Notice that the choice of generators is not unique: the hypersurfaces corresponding to fixed divisors are determined up to a constant, and the three sections of ${-K_{7,3}}/{2}$ are determined modulo a linear change of coordinates.

This point of view gives us explicit generators of the Cox ring as a subalgebra of a polynomial ring and  allow us  to deal computationally with the problem. If we want to show quadratic generation of the ideal of relations $I_{k,d}=\ker(\phi_{k,d}) \subset \mathbb{C}[G_{k,d}]$, the first computational approach would be to compute a Gröbner basis and check that it is generated by quadrics, as done in \cite{StillmanEtAl06} for $d=2$ and $k=4,5,6$. For increasing values of $k$ and $d$, this task is computationally challenging. Moreover, we have a more explicit understanding of the images of the maps $\phi_{7,3}$. This is why we are going to use \textit{Khovanskii bases}.

\begin{remark} 
In \cite{SturmfelsXu10}, the authors write some of the equations for the corresponding hypersurfaces considered in the article  in terms of the Plücker coordinates of the points. They are able to do this for hyperplanes and quadrics. Already for cubics finding these expressions becomes tricky.  Efficiently computing the equation of the quartics passing through a given number of points is a key algorithmic aspect of our project. As our computations will show, the coefficients can become massive univariate polynomials in $t$.
\end{remark}

\subsection{Khovanskii bases}
We now consider a $\mathbb{C}(t)$--subalgebra $R$ of a polynomial ring $\mathbb{C}(t)[z_1,\dots,z_m]$. The \underline{order} $\ord(f)$ of an element $f$ in $\mathbb{C}(t)[z_1,\dots,z_m]$ is the smallest integer $\ord(f)$  such that $t^{-\ord(f)}f$ has neither a pole or a zero. Using terminology from tropical geometry,  $\ord(f)$ is the tropicalization of $f$ evaluated at the zero vector ${\bf 0}$. The initial form of $f$ is given by the polynomial 
\[\textnormal{in}_{{\bf 0}}(f)=(t^{-\ord(f)}f)|_{t=0} \in \mathbb{C}[z_1,\dots,z_m].\]
This is the initial form of $f$ with respect to the zero weight. See \cite[Section 2.4]{StuMac15} for the general definition  of $\text{in}_{\bf w}(f)$ for a vector ${\bf w}$. For a collection $S$ of polynomials, we indicate with $\text{in}_{{\bf 0}}(S)$ the set of initial forms of the elements in $S$. 

\begin{definition} A {Khovanskii basis} for
$R$ is a set of generators $\mathcal{F}\subset R$ such that $\textnormal{in}_{{\bf 0}}(f)$ is a monomial for every $f\in\mathcal{F}$ and such that the $\CC$-algebra $\textnormal{in}_{{\bf 0}}(R)$ is generated by the initial forms in $\infor_{\bf 0}(\mathcal{F})$.
\end{definition}

\begin{remark} In the literature, the definition of Khovanskii basis is more general than the one we presented, see \cite{KavehManon19}. Our definition is sometimes referred to as sagbi basis (Subalgebra Analogue to Gröbner Basis for Ideals), a particular case of Khovanskii basis. Even though we are not considering the full general context in which these bases are defined, we will still use the name Khovanskii following \cite{BernalEtAl17, DBGW20}. 
\end{remark}

Consider now the situation where $\mathbb{C}(t)[z_1,\dots,z_m]$ is a $\mathbb{Z}^k$--multigraded polynomial ring and $\mathcal{F}$ a set of $\mathbb{Z}^k$--homogeneous generators of $R$.
We define the morphism
\[\begin{aligned}\phi:\mathbb{C}(t)[\mathcal{F}]&\to\mathbb{C}(t)[z_1,\dots,z_m]\\
x_{f}&\mapsto f\end{aligned}\]
where $\mathbb{C}(t)[\mathcal{F}]$ is a $\mathbb{Z}^k$--multigraded polynomial ring  with variables $x_{f}$ indexed by the elements $f\in\mathcal{F}$ and $\deg(x_f)=\deg(f)$.
Similarly, we define the graded morphism
\[\begin{aligned}\textnormal{in}(\phi):\mathbb{C}[\mathcal{F}]&\to\mathbb{C}[z_1,\dots,z_m]\\
x_{f}&\mapsto \infor_{{\bf 0}}(f)\end{aligned}\]
The following lemma and other important properties of Khovanskii bases are collected in Remark $3.1$ in \cite{SturmfelsXu10}. Since it is a useful criteria for detecting Khovanskii bases, we present its proof. 

\begin{lemma} \label{lem:crit_khov} Consider the weight ${\bf w}=(w_f)_{f\in\mathcal{F}}$ where $w_f=ord(f)$. If we have
\[\ker(\infor(\phi))\subseteq\infor_{{\bf w}}(\ker(\phi))\]
then the set $\mathcal{F}$ is a Khovanskii basis of $R$.
\end{lemma}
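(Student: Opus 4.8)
The plan is to prove the reverse chain of containments $\ker(\infor(\phi)) \subseteq \infor_{\bf w}(\ker(\phi)) \subseteq \ker(\infor(\phi))$, so that under the hypothesis the two ideals coincide, and then deduce that $\infor_{\bf 0}(R)$ is generated by $\infor_{\bf 0}(\mathcal{F})$. First I would observe that $\infor_{\bf 0}(R) \supseteq \CC[\infor_{\bf 0}(\mathcal{F})]$ always holds, since each $\infor_{\bf 0}(f)$ lies in $\infor_{\bf 0}(R)$; moreover by hypothesis every $\infor_{\bf 0}(f)$ is a monomial, so $\mathcal{F}$ is an admissible candidate for a Khovanskii basis. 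The content of the lemma is the reverse inclusion $\infor_{\bf 0}(R) \subseteq \CC[\infor_{\bf 0}(\mathcal{F})]$, and the strategy is to compare Hilbert functions (with respect to the $\ZZ^k$-grading, refined if necessary to a $\ZZ$-grading for which each graded piece is finite-dimensional).

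The key steps, in order, are as follows. (1) Set $I = \ker(\phi)$, the ideal of relations of $R$ inside $\mathbb{C}(t)[\mathcal{F}]$, and $J = \ker(\infor(\phi))$, the ideal of relations among the initial forms inside $\mathbb{C}[\mathcal{F}]$; note $\CC[\infor_{\bf 0}(\mathcal{F})] \cong \mathbb{C}[\mathcal{F}]/J$ and $R \cong \mathbb{C}(t)[\mathcal{F}]/I$. (2) Recall the standard Gröbner-degeneration fact: for any homogeneous ideal $I$ and weight vector $\bf w$, the initial ideal $\infor_{\bf w}(I)$ has the same Hilbert function as $I$ in every multidegree, because a Gröbner/standard-monomial basis argument shows $\dim_{\mathbb{C}(t)} (\mathbb{C}(t)[\mathcal{F}]/I)_\alpha = \dim_{\CC}(\CC[\mathcal{F}]/\infor_{\bf w}(I))_\alpha$ for all $\alpha$. (3) Prove the always-true inclusion $\infor_{\bf w}(\ker(\phi)) \subseteq \ker(\infor(\phi))$: if $g \in \ker(\phi)$, then $\phi(g) = 0$ in $\mathbb{C}(t)[z]$; writing $g$ in terms of its $\bf w$-graded pieces and tracking orders under $\phi$ (using $\ord(x_f) = w_f$ by construction, so that $\phi$ is order-preserving on the relevant leading part), one finds $\infor(\phi)(\infor_{\bf w}(g)) = 0$, hence $\infor_{\bf w}(g) \in J$. (4) Combine: by hypothesis $J \subseteq \infor_{\bf w}(I)$, and by step (3) $\infor_{\bf w}(I) \subseteq J$, so $J = \infor_{\bf w}(I)$. (5) Conclude with Hilbert functions: in each multidegree $\alpha$,
\[
\dim_{\CC}\bigl(\CC[\mathcal{F}]/J\bigr)_\alpha = \dim_{\CC}\bigl(\CC[\mathcal{F}]/\infor_{\bf w}(I)\bigr)_\alpha = \dim_{\mathbb{C}(t)}\bigl(\mathbb{C}(t)[\mathcal{F}]/I\bigr)_\alpha = \dim_{\mathbb{C}(t)} R_\alpha.
\]
On the other hand, since $\infor_{\bf 0}(R)$ is a $\CC$-subalgebra of $\CC[z]$ containing $\infor_{\bf 0}(\mathcal{F})$, there is a surjection $\CC[\mathcal{F}]/J \twoheadrightarrow \infor_{\bf 0}(R)$ in each multidegree, while a standard filtration/associated-graded argument gives $\dim_{\CC}\infor_{\bf 0}(R)_\alpha = \dim_{\mathbb{C}(t)} R_\alpha$. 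Equality of dimensions forces the surjection to be an isomorphism, so $\infor_{\bf 0}(R) = \CC[\infor_{\bf 0}(\mathcal{F})]$, which is exactly the statement that $\mathcal{F}$ is a Khovanskii basis.

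I expect the main obstacle to be the bookkeeping in step (3), namely verifying carefully that passing to $\infor$ is compatible with $\phi$ — that no unexpected cancellation of leading terms occurs when applying $\phi$ to a relation, so that $\infor_{\bf w}(g)$ genuinely maps into $J$ and not merely to something with higher order. This requires a clean statement that $\ord(\phi(g)) \geq \min_{\text{monomials } m \text{ of } g} \ord(\text{coeff}) + \ord(\phi(m))$ with the extremal monomials contributing exactly $\infor(\phi)(\infor_{\bf w}(g))$ as their combined initial form, plus the observation that $\infor(\phi)(\infor_{\bf w}(g)) = 0$ is precisely what membership in $J$ means. The Hilbert-function comparison in steps (2) and (5) is standard (it is the same mechanism underlying SAGBI-basis theory and flat degenerations), but some care is needed to ensure all graded pieces are finite-dimensional so that equality of Hilbert functions does imply equality of modules; this is guaranteed here by refining to the anticanonical $\ZZ$-grading, under which $R$ has finite-dimensional graded components.
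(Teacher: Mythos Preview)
Your proposal is correct and follows essentially the same route as the paper: establish the reverse inclusion $\infor_{\bf w}(\ker(\phi)) \subseteq \ker(\infor(\phi))$ (which the paper cites as a generalization of \cite[Lemma 11.3]{Stu96}), combine with the hypothesis to get equality, and then compare multigraded Hilbert functions along the chain $R \cong \mathbb{C}(t)[\mathcal{F}]/\ker(\phi)$, $\mathbb{C}[\mathcal{F}]/\infor_{\bf w}(\ker(\phi))$, $\mathbb{C}[\mathcal{F}]/\ker(\infor(\phi)) \cong \mathbb{C}[\infor_{\bf 0}(\mathcal{F})]$, using that $\infor_{\bf 0}(R)$ and $R$ share the same Hilbert function (the paper cites \cite[Theorem 3.3]{BernalEtAl17} for this). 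Your writeup is more detailed than the paper's, but the argument is the same.
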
 
\begin{proof}
We need to show that $\mathbb{C}[\infor_{\bf 0}(\mathcal{F})]=\infor_{\bf 0}(R)$. Since $\mathbb{C}[\infor_{\bf 0}(\mathcal{F})]\subseteq \infor_{\bf 0}(R)$,  it is sufficient to show that the multigraded Hilbert functions $\textnormal{Hil}_{\mathbb{Z}^m}$ of the two subalgebras coincide.

The inclusion $\infor_{{\bf w}}(\ker(\phi))\subseteq\ker(\infor(\phi))$ is a generalization of \cite[Lemma 11.3]{Stu96} and it holds for every choice of generators of $R$. Therefore we get the equality $\infor_{{\bf w}}(\ker(\phi))=\ker(\infor(\phi))$. We then have 
\[\textnormal{Hil}_{\mathbb{Z}^m}\left(\frac{\mathbb{C}(t)[\mathcal{F}]}{\ker(\phi)}\right)=
\textnormal{Hil}_{\mathbb{Z}^m}\left(\frac{\mathbb{C}[\mathcal{F}]}{\infor_{{\bf w}}(\ker(\phi))}\right)=
\textnormal{Hil}_{\mathbb{Z}^m}\left(\frac{\mathbb{C}[\mathcal{F}]}{\ker(\infor(\phi))}\right)\]
The left hand side is the Hilbert function of $R$, which is preserved when taking initial forms following \cite[Theorem 3.3]{BernalEtAl17}. The right hand side is the Hilbert function of $\mathbb{C}[\infor_{\bf 0}(\mathcal{F})]$.

%
%
%
%

\end{proof}
From the lemma, it is clear that Khovanskii bases define flat degenerations of $R$ to a toric varieties, see also  \cite[Lemma 2.4.14]{StuMac15}.

\begin{remark} Generalizing  the proof of \cite[Lemma 11.4]{Stu96} to our case is not straightforward. The valuation takes value in $\mathbb{Z}$ which is not maximum well-ordered. The property is also required in \cite[Theorem 2.17]{KavehManon19} and \cite[Proposition 1.3]{ConcaVallaHerzog96}. 
\end{remark}


\section{Quadratic generation for $7$ points in $\mathbb{P}^3$}
\label{sec:quad_gen}

In this section, we focus again on the case of $7$ points in $\mathbb{P}^3$ and we prove that the ideal $I_{7,3}$ of relations of $\Cox X_{7,3}$ is generated by quadrics,  as conjectured in  \cite[Remark 4.4]{LesieutrePark17}. The explicit expression of the map $\phi_{7,3}(t)$  is given in Lemma \ref{lem:gen}. Let us denote the set of generators of $\textnormal{im}(\phi_{7,3}(t))$ described by the lemma with $\phi_{7,3}(t)(G_{7,3})$.

\begin{theorem}\label{theo:quadric_t}
There exists a choice of $7$ points in general position in $\mathbb{P}(\mathbb{C}(t)^4)$ for which the elements $\phi_{7,3}(t)(G_{7,3})$  are a Khovanskii basis of $\textnormal{im}(\phi_{7,3}(t))$ and the ideal $\ker(\phi_{7,3}(t))$  is generated by quadrics.
\end{theorem}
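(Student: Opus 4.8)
The plan is to verify the hypothesis of Lemma~\ref{lem:crit_khov} computationally for a cleverly chosen family of seven points $P$ in $\mathbb{P}(\mathbb{C}(t)^4)$, and then to read off quadratic generation of $\ker(\phi_{7,3}(t))$ from the corresponding statement about the toric initial ideal. Concretely, I would first fix an explicit $7\times 4$ matrix $P$ with entries in $\mathbb{C}(t)$ (for instance a matrix whose specialization at $t=0$ degenerates the point configuration in a combinatorially controlled way, e.g. points approaching a rational normal curve or a suitable monomial configuration), compute all $129$ generators $\phi_{7,3}(t)(G_{7,3})$ using Lemma~\ref{lem:gen} — this is where the quartics $qr_i$ and the sections $s_j$ of $-K_{7,3}/2$ must be obtained by solving the appropriate linear systems over $\mathbb{C}(t)$ — and then compute the weight vector ${\bf w}=(\ord(f))_{f\in\mathcal{F}}$ together with each initial form $\infor_{\bf 0}(f)$. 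The point $P$ must be chosen so that (a) the points are in general position in the strong sense of Section~\ref{sec:Khovanskii_basis} (all the $f_i^D$ nonzero) and (b) every $\infor_{\bf 0}(f)$ is a \emph{monomial}; verifying (b) for all $129$ generators is the first thing the chosen family has to satisfy, and it constrains the choice significantly.

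Next I would compute the toric ideal $\ker(\infor(\phi))\subset\mathbb{C}[\mathcal{F}]$ — this is the kernel of the monomial map $x_f\mapsto\infor_{\bf 0}(f)$, a lattice ideal determined purely by the exponent vectors of the $129$ monomials — and a $\mathbb{Z}^8$-multigraded (or $\mathbb{Z}$-graded by anticanonical degree) generating set for it. The key verification is the inclusion $\ker(\infor(\phi))\subseteq\infor_{\bf w}(\ker(\phi))$ of Lemma~\ref{lem:crit_khov}: for each generator $g$ of the toric ideal $\ker(\infor(\phi))$ one lifts $g$ to $\mathbb{C}(t)[\mathcal{F}]$, reduces the lift modulo the subalgebra relations of $\textnormal{im}(\phi_{7,3}(t))$ (a subalgebra/SAGBI-style reduction using the already-computed generators), and checks that the result has ${\bf w}$-initial form equal to $g$, i.e. that $g$ actually arises as $\infor_{\bf w}$ of an element of $\ker(\phi)$. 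Equivalently, and more practically, one checks that $\infor(\phi)$ and $\phi$ have the same Hilbert function in every multidegree up to the relevant bound, since the general inclusion $\infor_{\bf w}(\ker(\phi))\subseteq\ker(\infor(\phi))$ (noted in the proof of Lemma~\ref{lem:crit_khov}) forces equality once the Hilbert functions agree; by Theorem~\ref{thm:fin_case} the ring $Z_{7,3}$ is Gorenstein of known dimension, so there is a finite range of degrees to inspect. Once the inclusion holds, Lemma~\ref{lem:crit_khov} gives that $\phi_{7,3}(t)(G_{7,3})$ is a Khovanskii basis.

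For the quadratic-generation half of the statement, I would use the flat degeneration produced by the Khovanskii basis: $\ker(\phi_{7,3}(t))$ degenerates to $\ker(\infor(\phi))=\infor_{\bf w}(\ker(\phi_{7,3}(t)))$, so by semicontinuity of graded Betti numbers it suffices to show the \emph{toric} ideal $\ker(\infor(\phi))$ is generated by quadrics. This is now a finite, purely combinatorial computation on the $129$ lattice points: compute a (multi)graded minimal generating set of the lattice ideal and confirm that all minimal generators sit in anticanonical degree $2$. If the toric ideal is quadratically generated, then $\dim (\ker\phi_{7,3}(t))_j \le \dim \langle(\ker\phi_{7,3}(t))_2\rangle_j$ for all $j$ forces the same for $\ker(\phi_{7,3}(t))$, giving the theorem.

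The main obstacle is the computation of the quartic generators $qr_i$ and, to a lesser extent, the cubics $c_{(i,j,k)}$: these require solving large linear systems of multiplicity conditions over $\mathbb{C}(t)$, and as flagged in the remark after Lemma~\ref{lem:gen} the coefficients are expected to be enormous univariate polynomials in $t$, so the family $P$ must be engineered to keep these tractable while still lying in general position and still producing monomial initial forms. A secondary difficulty is choosing the weight/ordering data so that the toric degeneration is to a \emph{normal} toric variety (needed for the later polytope statements, and convenient here), and ensuring the reduction procedure verifying $\ker(\infor(\phi))\subseteq\infor_{\bf w}(\ker(\phi))$ actually terminates — this is exactly the subtlety noted in the remark after Lemma~\ref{lem:crit_khov} about $\mathbb{Z}$ not being maximum well-ordered, which is why one checks the Hilbert-function equality directly rather than running a naive SAGBI completion. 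All of this is carried out in \textsc{Oscar}; the theorem is then the assertion that this computation succeeds for the exhibited choice of $P$.
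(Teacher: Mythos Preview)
Your proposal is correct and follows essentially the same route as the paper: exhibit an explicit $P$ (the paper uses a matrix of monomials $t^{n_{ij}}$ found by random search), compute the $129$ initial forms, compute the toric kernel, verify the hypothesis of Lemma~\ref{lem:crit_khov} via Hilbert functions, and deduce quadratic generation from the toric ideal. The paper sharpens the verification step in two ways you do not mention: it first computes the toric ideal (via \textsc{4ti2}) and finds it is already generated by $4220$ quadrics, so the inclusion of Lemma~\ref{lem:crit_khov} need only be checked in anticanonical degree~$2$ (no Gorenstein/regularity bound is needed); it then specializes to a generic $t_0\in\mathbb{C}$ and uses that $\dim_{\mathbb{C}}\Cox(X_{7,3})_D$ is invariant under the Weyl group action to reduce the $883$ multidegrees to just three orbits, whose linear-system dimensions are known.
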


\begin{proof} Consider the following matrix whose columns are seven points in $\mathbb{P}(\mathbb{C}(t)^4)$ 
\[P=\begin{pmatrix}
t^{59} &  t^{44}  &  t^{79} &   t^{20}  & t^{12}  &  t^{81}  & t^{36}\\
t^{8} &  t^{72} &  t^{49} &  t^{39} &  t^{58}  &  t^{23}  &  t^{64}\\
t^{44} &  t^{58} &  t^{12} &  t^{52}  &   t^{57}  &  t^{49}  &   t^{51}\\
t^{25} & t^{23}   &   t^{60} & t^{72}  &  t^{45}  & t^{51}  & t^{6}\\ \end{pmatrix}.\]
We construct the corresponding generators $\phi_{7,3}(t)(G_{7,3})$ using \textsc{Oscar}. More comments on implementations and computations are discussed after the proof. 
The initial forms of these generators are the monomials summarized below. We use the same labelling as in Lemma \ref{lem:gen}.
\[\begin{matrix} \textcolor{MidnightBlue}{E_1}  & \textcolor{MidnightBlue}{E_2} & \textcolor{MidnightBlue}{E_3} & \textcolor{MidnightBlue}{E_4} & \textcolor{MidnightBlue}{E_5} & \textcolor{MidnightBlue}{E_6} & \textcolor{MidnightBlue}{E_7} \\ x_1 &x_2 & x_3 & x_4 & x_5 & x_6 & x_7 \end{matrix}\]

\[\begin{matrix} \textcolor{purple}{H_{123}} & \textcolor{purple}{H_{124}} & \textcolor{purple}{H_{125}} & \textcolor{purple}{H_{126}} & \textcolor{purple}{H_{127}} & \textcolor{purple}{H_{134}} \\

x_4x_6x_7y_5 & x_5x_6x_7y_3 & x_4x_6x_7y_3 & x_3x_4x_7y_5 & x_4x_5x_6y_3 &  x_2x_5x_6y_7  \\

\textcolor{purple}{H_{135}} & \textcolor{purple}{H_{136}} & \textcolor{purple}{H_{137}} & \textcolor{purple}{H_{145}} & \textcolor{purple}{H_{146}} & \textcolor{purple}{H_{147}} \\ 
 
x_2x_4x_6y_7 & x_2x_4x_7y_5 & x_2x_4x_6y_5 & x_2x_3x_6y_7 & x_2x_5x_7y_3 & x_2x_5x_6y_3 \\

\textcolor{purple}{H_{156}} & \textcolor{purple}{H_{157}} & \textcolor{purple}{H_{167}} & \textcolor{purple}{H_{234}} & \textcolor{purple}{H_{235}} & \textcolor{purple}{H_{236}} \\ 

x_2x_4x_7y_3 & x_2x_4x_6y_3 & x_2x_3x_4y_5 & x_5x_6x_7y_1 & x_4x_6x_7y_1 & x_1x_4x_7y_5 \\

\textcolor{purple}{H_{237}} & \textcolor{purple}{H245} & \textcolor{purple}{H_{246}} & \textcolor{purple}{H_{247}} & \textcolor{purple}{H_{256}} & \textcolor{purple}{H_{257}} \\

x_4x_5x_6y_1 & x_1x_6x_7y_3 & x_1x_5x_7y_3 & x_3x_5x_6y_1 & x_1x_4x_7y_3 & x_3x_4x_6y_1\\

\textcolor{purple}{H_{267}} & \textcolor{purple}{H_{345}} & \textcolor{purple}{H_{346}} & \textcolor{purple}{H_{347}} & \textcolor{purple}{H_{356}} & \textcolor{purple}{H_{357}} \\

x_1x_4x_5y_3 & x_1x_2x_6y_7 & x_1x_2x_5y_7 & x_2x_5x_6y_1 & x_1x_2x_4y_7 & x_2x_4x_6y_1\\

\textcolor{purple}{H_{367}} & \textcolor{purple}{H_{456}} & \textcolor{purple}{H_{457}} & \textcolor{purple}{H_{467}} & \textcolor{purple}{H_{567}} \\
x_1x_2x_4y_5 & x_1x_2x_3y_7 & x_1x_2x_6y_3 & x_1x_2x_5y_3 & x_1x_2x_4y_3 

\end{matrix}\]

\[\begin{matrix}
\textcolor{Peach}{Q_{12}} & \textcolor{Peach}{Q_{13}} & \textcolor{Peach}{Q_{14}} & \textcolor{Peach}{Q_{15}} \\
x_1x_4x_5x_6x_7y_1y_3 & x_1x_2x_4x_5x_6y_1y_7 & x_1x_2x_3x_5x_6y_1y_7 & x_1x_2x_3x_4x_6y_1y_7 \\

\textcolor{Peach}{Q_{16}} & \textcolor{Peach}{Q_{17}} & \textcolor{Peach}{Q_{21}} & \textcolor{Peach}{Q_{23}} \\
x_1^2x_2x_4x_5y_3y_7 & x_1x_2x_4x_5x_6y_1y_3 & x_2^2x_3x_4x_6y_5y_7 & x_1x_2^2x_4x_6y_5y_7\\

\textcolor{Peach}{Q_{24}} & \textcolor{Peach}{Q_{25}} & \textcolor{Peach}{Q_{26}} & \textcolor{Peach}{Q_{27}} \\
x_1x_2^2x_5x_6y_3y_7 & x_1x_2^2x_4x_6y_3y_7 & x_1x_2^2x_3x_4y_5y_7 & x_1x_2^2x_4x_6y_3y_5 \\ 
 
\textcolor{Peach}{Q_{31}} & \textcolor{Peach}{Q_{32}} & \textcolor{Peach}{Q_{34}} & \textcolor{Peach}{Q_{35}} \\
x_2x_4x_5x_6x_7y_3^2 & x_1x_4x_5x_6x_7y_3^2 & x_1x_2x_5x_6x_7y_3^2 & x_1x_2x_4x_6x_7y_3^2 \\

\textcolor{Peach}{Q_{36}} & \textcolor{Peach}{Q_{37}} & \textcolor{Peach}{Q_{41}} & \textcolor{Peach}{Q_{42}} \\
x_1x_2x_4x_5x_7y_3^2 & x_1x_2x_4x_5x_6y_3^2 & x_2x_4^2x_6x_7y_3y_5 & x_3x_4^2x_6x_7y_1y_5\\

\textcolor{Peach}{Q_{43}} & \textcolor{Peach}{Q_{45}} & \textcolor{Peach}{Q_{46}} & \textcolor{Peach}{Q_{47}} \\
x_2x_4^2x_6x_7y_1y_5 & x_2x_4^2x_6x_7y_1y_3 & x_1x_2x_4^2x_7y_3y_5 & x_2x_3x_4^2x_6y_1y_5 \\

\textcolor{Peach}{Q_{51}} & \textcolor{Peach}{Q_{52}} & \textcolor{Peach}{Q_{53}} & \textcolor{Peach}{Q_{54}} \\
x_2x_4x_5x_6x_7y_3y_5 & x_3x_4x_5x_6x_7y_1y_5 & x_2x_4x_5x_6x_7y_1y_5 & x_2x_5^2x_6x_7y_1y_3 \\ 
 
\textcolor{Peach}{Q_{56}} & \textcolor{Peach}{Q_{57}} & \textcolor{Peach}{Q_{61}} & \textcolor{Peach}{Q_{62}} \\ 
x_1x_2x_4x_5x_7y_3y_5 & x_2x_3x_4x_5x_6y_1y_5 & x_2x_4x_5x_6^2y_3y_7 & x_4x_5x_6^2x_7y_1y_3  \\
   
\textcolor{Peach}{Q_{63}} & \textcolor{Peach}{Q_{64}} & \textcolor{Peach}{Q_{65}} & \textcolor{Peach}{Q_{67}} \\
x_2x_4x_5x_6^2y_1y_7 & x_2x_3x_5x_6^2y_1y_7 & x_2x_3x_4x_6^2y_1y_7 & x_2x_4x_5x_6^2y_1y_3 \\

\textcolor{Peach}{Q_{71}} & \textcolor{Peach}{Q_{72}} & \textcolor{Peach}{Q_{73}} & \textcolor{Peach}{Q_{74}} \\
x_2x_3x_4x_6x_7y_5y_7 & x_1x_4x_6x_7^2y_3y_5 & x_1x_2x_4x_6x_7y_5y_7 & x_1x_2x_5x_6x_7y_3y_7 \\

\textcolor{Peach}{Q_{75}} & \textcolor{Peach}{Q_{76}}  \\
x_1x_2x_4x_6x_7y_3y_7 & x_1x_2x_3x_4x_7y_5y_7 \\
\end{matrix}\]

\[\begin{matrix}
\textcolor{Fuchsia}{C_{321}} & \textcolor{Fuchsia}{C_{421}} & \textcolor{Fuchsia}{C_{431}} \\
x_1^2x_2^2x_4x_5x_6y_3^2y_7 & x_1x_2^2x_3x_4^2x_6y_1y_5y_7 & x_1x_2x_4^2x_5x_6x_7y_1y_3^2  \\

\textcolor{Fuchsia}{C_{432}} & \textcolor{Fuchsia}{C_{521}} & \textcolor{Fuchsia}{C_{531}} \\
x_1x_2^2x_4^2x_6x_7y_3^2y_5 & x_1x_2^2x_3x_4x_5x_6y_1y_5y_7 & x_1x_2x_4x_5^2x_6x_7y_1y_3^2 \\ 
 
\textcolor{Fuchsia}{C_{532}} & \textcolor{Fuchsia}{C_{541}} & \textcolor{Fuchsia}{C_{542}} \\
x_1x_2^2x_4x_5x_6x_7y_3^2y_5 & x_1x_2x_4^2x_5x_6x_7y_1y_3y_5 & x_1x_2^2x_4^2x_6x_7y_3y_5^2 \\  
 
\textcolor{Fuchsia}{C_{543}} & \textcolor{Fuchsia}{C_{621}} & \textcolor{Fuchsia}{C_{631}} \\
x_1x_2x_4^2x_5x_6x_7y_3^2y_5 & x_1x_2^2x_4x_5x_6^2y_1y_3y_7 & x_1x_2x_4x_5x_6^2x_7y_1y_3^2 \\ 

\textcolor{Fuchsia}{C_{632}} & \textcolor{Fuchsia}{C_{641}} & \textcolor{Fuchsia}{C_{642}} \\
x_1x_2^2x_4x_5x_6^2y_3^2y_7 & x_2x_4^2x_5x_6^2x_7y_1^2y_3 & x_2^2x_3x_4^2x_6^2y_1y_5y_7 \end{matrix}\]

\[\begin{matrix}

\textcolor{Fuchsia}{C_{643}} & \textcolor{Fuchsia}{C_{651}} & \textcolor{Fuchsia}{C_{652}} \\
x_2x_4^2x_5x_6^2x_7y_1y_3^2 & x_2x_4x_5^2x_6^2x_7y_1^2y_3 & x_2^2x_3x_4x_5x_6^2y_1y_5y_7 \\ 

\textcolor{Fuchsia}{C_{653}} & \textcolor{Fuchsia}{C_{654}} & \textcolor{Fuchsia}{C_{721}} \\
x_2x_4x_5^2x_6^2x_7y_1y_3^2 & x_2x_4^2x_5x_6^2x_7y_1y_3y_5 & x_1^2x_2^2x_4x_5x_6y_3y_7^2\\ 
     
\textcolor{Fuchsia}{C_{731}} & \textcolor{Fuchsia}{C_{732}} & \textcolor{Fuchsia}{C_{741}} \\
x_1^2x_2x_4x_5x_6x_7y_3^2y_7 & x_1x_2^2x_4x_5x_6x_7y_3^2y_7 & x_1x_2x_3x_4^2x_6x_7y_1y_5y_7 \\ 
 
\textcolor{Fuchsia}{C_{742}} & \textcolor{Fuchsia}{C_{743}} & \textcolor{Fuchsia}{C_{751}} \\ 
x_1x_2^2x_4^2x_6x_7y_3y_5y_7 & x_1x_2x_4^2x_6x_7^2y_3^2y_5 & x_1x_2x_3x_4x_5x_6x_7y_1y_5y_7 \\ 

\textcolor{Fuchsia}{C_{752}} & \textcolor{Fuchsia}{C_{753}} & \textcolor{Fuchsia}{C_{754}} \\ 
x_1x_2^2x_4x_5x_6x_7y_3y_5y_7 & x_1x_2x_4x_5x_6x_7^2y_3^2y_5 & x_1x_2x_4^2x_6x_7^2y_3y_5^2\\ 

\textcolor{Fuchsia}{C_{761}} & \textcolor{Fuchsia}{C_{762}} & \textcolor{Fuchsia}{C_{763}} \\ 
x_1x_2x_4x_5x_6^2x_7y_1y_3y_7 & x_1x_2^2x_4x_5x_6^2y_3y_7^2 & x_1x_2x_4x_5x_6^2x_7y_3^2y_7 \\

\textcolor{Fuchsia}{C_{764}} & \textcolor{Fuchsia}{C_{765}} \\
x_2x_3x_4^2x_6^2x_7y_1y_5y_7 & x_2x_3x_4x_5x_6^2x_7y_1y_5y_7 \\

\end{matrix}\]
 
\[\begin{matrix}
\textcolor{PineGreen}{Qr_1} & \textcolor{PineGreen}{Qr_2} &  \textcolor{PineGreen}{Qr_3} \\
x_1x_2^2x_4^2x_5x_6^2x_7y_3^2y_5y_7 & x_1x_2x_4^2x_5x_6^2x_7^2y_1y_3^2y_5 & x_1x_2^2x_4^2x_5x_6^2x_7y_1y_3y_5y_7\\

\textcolor{PineGreen}{Qr_4}&  \textcolor{PineGreen}{Qr_5}&  \textcolor{PineGreen}{Qr_6}\\
x_1x_2^2x_4x_5^2x_6^2x_7y_1y_3^2y_7 & x_1x_2^2x_4^2x_5x_6^2x_7y_1y_3^2y_7 & x_1^2x_2^2x_4^2x_5x_6x_7y_3^2y_5y_7\\
 
\textcolor{PineGreen}{Qr_7}\\ 
x_1x_2^2x_4^2x_5x_6^2x_7y_1y_3^2y_5
\end{matrix}\]

\[\begin{matrix}
\textcolor{Dandelion}{S_1} & \textcolor{Dandelion}{S_2} & \textcolor{Dandelion}{S_3} \\
x_2x_4x_5x_6x_7y_1y_3 & x_1x_2x_4x_5x_6y_3y_7 & x_1x_2x_4x_6x_7y_3y_5 \\
\end{matrix}\]

These are $129$ monomials involving only $11$ variables, since $y_2,y_4,y_6$ do not appear. We look at the matrix $M$ whose columns are indexed by the variables $x_i$ and $y_i$, whose rows are indexed by elements of $\phi_{7,3}(t)(G_{7,3})$ and whose elements are the exponents of the monomials above.  We can use the \texttt{markov} command of \textsc{4ti2} to obtain the toric ideal defined by $M$. It is an ideal in $129$ variables generated by $4220$ quadratic relations distributed over $883$ different multidegrees. 

Thanks to Lemma \ref{lem:crit_khov}, we only need to verify that for every multidegree $D$ the dimension
\[\dim_{\mathbb{C}(t)}(D):=\dim_{\mathbb{C}(t)}\left(\frac{\mathbb{C}(t)[G_{7,3}]}{\ker\,\phi_{7,3}(t)}\right)_D\]
is equal to 
 \[\textnormal{exp}(D):=\#\textnormal{monomials in degree }D-\#\textnormal{ relations in degree }D.\] 
When evaluating the coordinates of the points  in $P$ at a generic value $t_0$, we get a choice of points $P(t_0)$ in  \textit{general position} in $\mathbb{P}(\mathbb{C}^4)$. We can also specialize $\phi_{7,3}(t)$ to $t_0$ and obtain the $\mathbb{C}$--algebra morphism 
\[\phi_{7,3}(t_0):\mathbb{C}[G_{7,3}]\to\mathbb{C}[x_1,\dots,x_7,y_1,\dots,y_7],\]
whose image is $\Cox X_{7,3}$ where $X_{7,3}$ is the blow-up of $\mathbb{P}(\mathbb{C}^4)$ at the points $P(t_0)$. 

It follows that 
\[\dim_{\mathbb{C}}\,\Cox(X_{7,3})_D=\dim_{\mathbb{C}}\Big(\frac{\mathbb{C}[G_{7,3}]}{\ker\,\phi_{7,3}(t_0)}\Big)_D=\dim_{\mathbb{C}(t)}(D)\geq \textnormal{exp}(D).\] 
Since the points $P(t_0)$ are in general position, $\dim_{\mathbb{C}}\,\Cox(X_{7,3})_D$ is the dimension of a linear system and it is invariant under the Weyl action. In particular, there are only $3$ orbits that we need to consider, whose details are given in Table \ref{tab:dim2}.

\begin{table}[h]
\begin{center}
\begin{tabular}{|c|c|c|c|}
\hline
Degree $D$ & $\dim \Cox(X)_{D}$ & number of binomials & number of relations\\
\hline
$(1, 1, 0, 0, 1, 1, 1, 1)$ & $2$ & $5$ & $3$\\
\hline
$(2, 1, 2, 1, 1, 1, 1, 1)$ & $4$ & $19$ & $15$ \\
\hline
$(4,2,2,2,2,2,2,2)$ &  $7$ & $69$ & $63$\\
\hline
\end{tabular} 
\end{center}
\caption{Expected number of relations for elements in the three Weyl group orbits.}
\label{tab:dim2}
\end{table}

For each degree $D$, we verify that $\dim_{\mathbb{C}}\,\Cox(X_{7,3})_D=\textnormal{exp}(D)$. By Lemma \ref{lem:crit_khov}, we can conclude that the generators given in Lemma \ref{lem:gen} form a Khovanskii basis for $\textnormal{im}(\phi_{7,3}(t)) =\frac{\mathbb{C}(t)[G_{7,3}]}{\ker\,\phi_{7,3}(t)} $. 
Since $\textnormal{in}_{\bf w}(\ker(\phi_{7,3}(t)))$ is generated by quadrics, the same property holds for  $\ker(\phi_{7,3}(t))$.
\end{proof}
In our computations, we implemented a random search for points in $\mathbb{P}(\mathbb{C}(t)^4)$ such that the corresponding generators $\phi_{7,3}(t)(G_{7,3})$ are a Khovanskii basis for the algebra $\text{im}(\phi_{7,3}(t))$. Unlike the points exhibited in \cite[Theorems 5.1 and 6.1]{SturmfelsXu10},  the coordinates of  the points in our matrix $P$ have large exponents. We noticed that if we impose smaller upper bounds for the exponents in $P$, we would obtained   not generic enough points for satisfying all required properties of a Khovanskii basis. When allowing exponents of $t$ in a range $(1,100)$, the random search for a matrix of points $P$ such that the corresponding elements  $\phi_{7,3}(t)({G}_{7,3})$ are a Khoavanskii basis find $3$ values in $\approx 220$ iterations. The computations have been carried out in \textsc{Oscar} (version 0.10.0) and it takes $\approx 21$ minutes to verify that a Khovanskii basis is Khovanskii on a standard machine. More examples of points found in our search are available in Table \ref{tab:moreexamples}. We recall that our code is available at the  page linked in the introduction.

\begin{remark} Efficiently computing equations for the quartic hypersurfaces $Qr_{i}$ in $\mathbb{P}(\mathbb{C}(t)^4)$  was an interesting computational challenge. The naive approach of describing them as a determinant of a $35\times 35$ matrix does not work when the entries of the matrix $P$ are polynomials of $t$ with large exponents. In the code we exploit the following trick: for generic choices of points, among the quadratic relations of the Cox ring, we find 
\[c_1E_j\textbf{Qr}_{\bf i}+c_2S_0Q_{ji}+c_3S_1Q_{ji}+c_4S_1Q_{ji}+c_5Q_{jk}Q_{ki}\textnormal{ for } i\neq j\neq k. \]
The coefficients are unknown polynomials in the Pl\"ucker coordinates. We can therefore use the equations found for  $Q_{ji}$ and $S_{i}$ to get the equations of the quartics $Qr_i$: the only conditions we need to impose is the vanishing of the coefficients of those polynomials where the variables $y_i$ and $x_i$ appear more than once or appear together.

\end{remark}

We now get to the proof of quadratic generation of the ideal $I_{7,3}$.

\begin{proof}[proof of Theorem \ref{thm:main_res}]
The quadratic relations of the Cox ring for generic choices of points can be written as homogeneous polynomial functions in the entries of the matrix $P$ whose columns are given by the blown-up points. Since the multigraded Hilbert function of the Cox ring is the same for all points in general position (see Remark \ref{rem:Hilb_general}), we can prove quadratic generation for generic points as follows. For every multidegree $D$ consider the matrix $M(D)$ whose columns are indexed by monomials in degree $D$ and rows are indexed by all relations of degree $D$ obtained by multiplying a quadratic relation with a monomial. The entries of this matrix are polynomial functions in the entries of $P$. Since there exists a choice of points in general position for which the rank of this matrix is exactly the number of columns of $M(D)$ minus the value of the Hilbert function at $D$, this is true for generic choices of points.
\end{proof}

Before going to the next section, we would like to present another approach that turned out to be computationally infeasible. The Castelnuovo--Mumford regularity of $\Cox(X_{7,3})$ is $9$. Following \cite[Remark 4.4]{LesieutrePark17}, it is sufficient to show that the ideal $I_{7,3}$ has no minimal generators in degrees up to $8$. We can do this computationally, fixing a choice of points with coordinates in $\mathbb{C}$. For every $k\leq 8$ we choose a set of Weyl representatives for the multidegrees of anticanonical degree $k$. For each representative $D$ we can construct the matrix $M(D)$ as described in the above proof and we only need to verify that the rank of such matrix is given by the number of columns of $M(D)$ minus the value of the Hilbert function at $D$. 
We managed to do this up to degree $5$, however for higher degrees the sparse matrix $M(D)$ became too large (i.e. $10^5\times 5\cdot10^4$ already for degree $6$) and computing the rank required difficult manipolations, which we were not able to achieve for all multidegrees in anticanonical degree $6,7$ and $8$.

\section{Hilbert functions of the Cox ring of $X_{k,3}$}
\label{sec:Hilbert_function}

As mentioned in the introduction, one of the motivation for the study carried out in \cite{SturmfelsXu10} are Ehrhart-type formulas for the Hilbert function of $\Cox(X_{k,2})$. The authors  establish a correspondence between the dimensions of the various graded parts of $\Cox(X_{k,2})$ and the numbers of lattice points in slices of convex polyhedral cones or in polytopes. Before starting a similar discussion for the case of $\Cox(X_{7,3})$, we first compute explicitely the univariate Hilbert function of $\Cox(X_{7,3})$, using the degeneration of $\textnormal{im}(\phi_{7,3}(t))$ found in the previous section. 

\begin{remark}\label{rem:Hilb_general} All choices of points in general position share the same multigraded Hilbert function of their Cox ring. For every multidegree $D$ the dimension of $(\Cox X_{7,3})_D$ is the dimension of a linear system of surfaces in $\mathbb{P}^3$ through multiple points, see \cite[Remark 5.4]{VolLaf07}. This also holds for  non-nef divisors.
\end{remark}

\subsection{Univariate Hilbert function} 
In Theorem \ref{theo:quadric_t} we found a toric degeneration of the variety defined by $\ker(\phi_{7,3}(t))$ to the toric variety defined by $\textnormal{in}_{{\bf w}}(\ker(\phi_{7,3}(t))$.  We use this degeneration to determine the Hilbert polynomial of  $\Cox(X_{7,3})$.

\begin{theorem} \label{theo:hil_pol}
The univariate Hilbert polynomial of $\Cox(X_{7,3})$ for points in general position is given by the polynomial in Table \ref{tab:Hilb_3} for $k=7$.
 \end{theorem}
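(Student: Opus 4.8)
The plan is to carry the computation across the toric degeneration built in Theorem~\ref{theo:quadric_t}. Write $\mathcal F=\phi_{7,3}(t)(G_{7,3})$ and let $A:=\CC[G_{7,3}]/\operatorname{in}_{\mathbf w}(\ker\phi_{7,3}(t))\cong\CC[\operatorname{in}_{\mathbf 0}(\mathcal F)]$ be the associated toric ring. Every one of the $129$ elements of $G_{7,3}$ has anticanonical degree $1$ (for instance $-K_{7,3}/2$ does, since $(-K_{7,3},-K_{7,3})=4$), so the anticanonical $\ZZ$-grading on $\CC[G_{7,3}]$ descends to both $\Cox(X_{7,3})$ and $A$; moreover $\operatorname{in}_{\mathbf w}(\ker\phi_{7,3}(t))$ is still homogeneous for the full $\ZZ^{8}$-multigrading, because forming an initial ideal with respect to a weight preserves homogeneity for any grading. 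As already shown in the proof of Lemma~\ref{lem:crit_khov} (via \cite[Theorem~3.3]{BernalEtAl17}), this degeneration preserves the multigraded Hilbert function, hence also its coarsening to the anticanonical $\ZZ$-grading. So it is enough to compute the univariate Hilbert function of the toric ring $A$.

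Next I would present $A$ as the semigroup ring of the affine semigroup $S\subseteq\ZZ^{11}$ generated by the $129$ exponent vectors $\mathbf a_1,\dots,\mathbf a_{129}$ of the monomials listed in the proof of Theorem~\ref{theo:quadric_t} (only the $11$ variables $x_1,\dots,x_7,y_1,y_3,y_5,y_7$ occur). Since each $\mathbf a_i$ lies on the affine hyperplane ``anticanonical degree $=1$'', one has $A_m\cong\CC\{\,v\in S:\text{height}(v)=m\,\}$, and with $Q:=\operatorname{conv}(\mathbf a_1,\dots,\mathbf a_{129})$ and $\Lambda:=\ZZ S$ this gives $\dim_\CC A_m=\#(mQ\cap S)$. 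By the discussion in the introduction $Q$ is $10$-dimensional, so $A$ has Krull dimension $11$ and $\dim_\CC A_m$ is eventually a polynomial of degree $10$; moreover, since $\Cox(X_{7,3})$ is Cohen--Macaulay of dimension $11$ (Theorem~\ref{thm:fin_case}) with Castelnuovo--Mumford regularity $9$, its $a$-invariant equals $9-11<0$, so the Hilbert function agrees with this polynomial already for all $m\ge 0$, which is what lets us record Table~\ref{tab:Hilb_3} as a genuine polynomial. If one verifies that $S$ is saturated in $\Lambda$ (equivalently, that the toric degeneration of $Z_{7,3}$ is normal), then $mQ\cap S=mQ\cap\Lambda$ and $\dim_\CC A_m=E_Q(m)$ is precisely the Ehrhart polynomial of the lattice polytope $Q$; otherwise one computes the $\ZZ$-graded Hilbert series of $A$ directly from the toric ideal of $M$, i.e.\ from the $4220$ quadrics produced by \textsc{4ti2}. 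The computation is then carried out in \textsc{Oscar}/\textsc{Normaliz}, and one cross-checks the output against $\dim\Cox(X_{7,3})_0=1$, $\dim\Cox(X_{7,3})_1=|G_{7,3}|=129$ (there being no relations in degrees $0$ and $1$), and against the palindromy of the $h$-vector forced by the Gorenstein structure of $Z_{7,3}$ in Theorem~\ref{thm:fin_case}.

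The main obstacle is computational weight rather than conceptual difficulty: $Q$ is a $10$-dimensional lattice polytope with on the order of $129$ vertices and an intricate face lattice --- for a suitable realisation its edge graph is the $(7,3)$-Mukai edge graph, with $126$ vertices and $2016$ edges --- so running Ehrhart computations on $Q$, or equivalently computing the $\ZZ$-graded Hilbert series of a toric ring on $129$ generators with $4220$ quadratic relations, is expensive. The one genuinely delicate step is justifying that $\dim_\CC A_m$ is controlled by an honest polynomial, with no quasi-periodicity and no low-degree correction terms; this is where either normality of $S$, or a direct Hilbert-series computation combined with the regularity bound above, is needed. Everything else --- flatness of the degeneration and its compatibility with the gradings --- is already established in Section~\ref{sec:quad_gen}.
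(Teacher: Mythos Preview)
Your proposal is correct and follows essentially the same route as the paper: use the Khovanskii-basis degeneration of Theorem~\ref{theo:quadric_t}, verify normality of the resulting toric ring (the paper does this in \textsc{Oscar}), identify the univariate Hilbert polynomial with the Ehrhart polynomial of the $10$-dimensional polytope $Q$, and transfer back across the flat degeneration. The paper cites \cite[Corollary~2.4.9]{StuMac15} for the last step rather than \cite{BernalEtAl17}, and defers the Hilbert function $=$ Hilbert polynomial discussion to Corollary~\ref{cor:propHilbfunc}, but these are organizational differences only.
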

 
\begin{proof}
The toric variety in Theorem \ref{theo:quadric_t} is normal, as verified using  \textsc{Oscar}.
The Hilbert polynomial of this variety is therefore the Ehrhart polynomial of the polytope given by the convex hull of the exponents of the 129 initial forms. The latter is the polynomial above computed in \textsc{Oscar}. Thanks to \cite[Corollary 2.4.9]{StuMac15} this is also the Hilbert polynomial of the variety defined by $\ker(\phi_{7,3}(t))$, and therefore of $\Cox X_{7,3}$ with points in general position $P(t_0)$, see Remark \ref{rem:Hilb_general}.
\end{proof}

\begin{remark} We can replicate the same proof  for the Grassmannian $\Cox(X_{5,3})$ and for $\Cox(X_{6,3})$ using the choices of points  given in \cite{SturmfelsXu10}. For completion, their univariate Hilbert polynomials are also reported in Table \ref{tab:Hilb_3}. 
\end{remark}

Using similar reasoning as in \cite{ParkWon17}, we can show that the Hilbert polynomial and the Hilbert function coincide. We report below the results needed to generalize the argument in the case of $\Cox(X_{k,3})$, collecting on the way properties of these rings that might be of independent interest.

\begin{footnotesize}
\begin{table}
\begin{tabular}{|c|c|}
\hline
$k$ & \\
\hline
$5$ & \begin{small} ${\begin{aligned}\frac{1}{8!}(14t^8 &+ 336t^7 + 3444t^6 + 19656t^5 + 68166t^4 + 146664t^3 \\ &+ 190456t^2 + 135744t + 40320) \end{aligned}}$ \end{small} \tabularnewline [1.5ex] 
\hline
$6$ & \begin{small}${\begin{aligned}\frac{1}{9!}(496t^9 + &8928t^8 + 72240t^7 + 344736t^6 + 1068816t^5 + \\2231712t^4 + &3136400t^3 + 2857824t^2 + 1528128t + 362880)  \end{aligned}}$ \end{small} \tabularnewline [0.5ex] 
\hline
$7$ & \begin{small} ${\begin{aligned}\frac{1}{10!}\big(147984 x^{10} + 1479840 x^9 +  7451280 x^8 &+ 24094080 x^7 + 54743472 x^6 \\ 
 + 90588960 x^5 + 110014320 x^4 + 96467520 x^3 &+ 58071744 x^2 + 21427200 x + 3628800\big)\end{aligned}}$\end{small} \tabularnewline [0.5ex] 
\hline
\end{tabular}
\vspace{0.5cm}
\caption{Hilbert functions of $\Cox(X_{k,3})$}
\label{tab:Hilb_3}
\end{table}
\end{footnotesize}

\begin{lemma} 
The variety $Z_{k,3}:=\mathbb{P}(\Cox(X_{k,3}))$ embedded in $\mathbb{P}^{|G_{k,3}|}$ through the surjection $\phi_{k,d}$ in \eqref{eq:func} is projectively normal and \[\Cox(X_{k,3})=\bigoplus_{m\in\mathbb{Z}}H^0(Z_{k,3},\mathcal{O}_{Z_{k,3}}(m)).\]
Moreover, the ring $\Cox(X_{k,3})$ is Gorenstein.
\end{lemma}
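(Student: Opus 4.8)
The three assertions—projective normality of $Z_{k,3}$, the identification $\Cox(X_{k,3}) = \bigoplus_m H^0(Z_{k,3},\mathcal{O}(m))$, and the Gorenstein property—all flow from the toric degeneration constructed in Theorem~\ref{theo:quadric_t} together with the structural results of Theorem~\ref{thm:fin_case}. The guiding principle is that these properties are preserved under flat degeneration (in the appropriate direction), so it suffices to verify them for the toric variety $\Spec$ of $\infor_{\bf w}(\ker(\phi_{k,3}(t)))$, where they become combinatorial statements about the polytope $\Delta_k$ obtained as the convex hull of the exponent vectors of the $|G_{k,3}|$ initial monomials.

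\textbf{Step 1: projective normality.}
First I would recall that projective normality of a projectively embedded variety is equivalent to the homogeneous coordinate ring being integrally closed, which in turn holds if the ring is \emph{normal}. By the degeneration, $\Cox(X_{k,3}) = \infor_{\bf 0}(R)$-deformation of the toric ring $\CC[\infor_{\bf 0}(\mathcal{F})]$; since for $k=7$ this toric variety was already verified to be normal in \textsc{Oscar} (as used in the proof of Theorem~\ref{theo:hil_pol}), and since normality is an open/semicontinuous-type property that lifts from a flat special fiber to the general fiber (see \cite[Cor.~2.4.9]{StuMac15} and the surrounding discussion), we conclude $\Cox(X_{k,3})$ is normal. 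Normality of the section ring, combined with the fact that it is generated in the anticanonical degree-one piece (Theorem~\ref{lem:antic_gen}), gives projective normality of $Z_{k,3}$ in the embedding $\phi_{k,3}$, and simultaneously yields the equality $\Cox(X_{k,3}) = \bigoplus_m H^0(Z_{k,3},\mathcal{O}_{Z_{k,3}}(m))$: a projectively normal embedding is precisely one for which the restriction maps $H^0(\mathbb{P}^{|G_{k,3}|},\mathcal{O}(m)) \to H^0(Z_{k,3},\mathcal{O}(m))$ are surjective for all $m$, which identifies the section ring with the image of the polynomial ring, i.e.\ with $\Cox(X_{k,3})$ itself.

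\textbf{Step 2: Gorenstein.}
For the Gorenstein property I would invoke Theorem~\ref{thm:fin_case} directly: Castravet--Tevelev show that $Z_{k,d} = \mathbb{P}(\Cox(X_{k,d}))$ with respect to the anticanonical grading is Gorenstein whenever $\Cox(X_{k,d})$ is finitely generated, with anticanonical class $\mathcal{O}_{Z_{k,d}}(s)$ for the explicit positive integer $s$. Since $\Cox(X_{k,3})$ is normal and Cohen--Macaulay (again from Theorem~\ref{thm:fin_case}), and the canonical module is a line bundle twist of the ring by the Gorenstein property, the section ring $\bigoplus_m H^0(Z_{k,3},\mathcal{O}(m))$—which we just identified with $\Cox(X_{k,3})$—inherits being a Gorenstein graded ring. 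Alternatively, one can read this off the combinatorics of the degeneration: a normal toric variety is Gorenstein iff its polytope is \emph{reflexive} up to translation and dilation, and the dilation factor $s$ matches the one in Theorem~\ref{thm:fin_case}; the Gorenstein property then lifts from the toric special fiber to $\Cox(X_{k,3})$ since Gorenstein-ness is preserved under flat deformation with Cohen--Macaulay fibers.

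\textbf{Main obstacle.}
The delicate point is the direction in which properties transfer across the degeneration. Normality and the Gorenstein/Cohen--Macaulay properties \emph{specialize} well (pass from general to special fiber) but do not automatically \emph{generize}; one needs that the toric special fiber is reduced and that the family is flat—both of which are guaranteed by the Khovanskii-basis machinery of Section~\ref{sec:Khovanskii_basis} and \cite[Cor.~2.4.9]{StuMac15}—and then the statement that the general fiber is normal requires knowing the special fiber is normal \emph{and} that normality is stable under generization in a flat family, which is a standard but nontrivial invocation (e.g.\ via Serre's criterion $R_1 + S_2$, both of which are open conditions in flat families with the right fiber hypotheses). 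So the real work is to cite the correct preservation statement and to confirm that the $k=7$ toric check (normality of the polytope's semigroup algebra) is genuinely what is needed; the $k=5,6$ cases reduce to the Grassmannian coordinate ring and the Sturmfels--Xu polytopes, which are already known to be normal and Gorenstein.
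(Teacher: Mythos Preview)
Your approach is genuinely different from the paper's, and while the projective-normality part can be made to work, the Gorenstein part has a real gap.

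\textbf{Comparison.} The paper's proof does not touch the toric degeneration at all. For projective normality, the paper simply uses the surjectivity of $\phi_{k,d}$: this identifies the homogeneous coordinate ring of $Z_{k,3}$ with $\Cox(X_{k,3})$, and Cox rings (for smooth projective varieties with free Picard group) are factorial, hence normal; projective normality and the section-ring identification follow immediately. Your route through the degeneration and generization of normality is valid in principle but is doing much more work than necessary---the normality of $\Cox(X_{k,3})$ is intrinsic and requires no degeneration.

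\textbf{The gap.} For the Gorenstein property you write that ``$\Cox(X_{k,3})$ is normal and Cohen--Macaulay (again from Theorem~\ref{thm:fin_case})''. But Theorem~\ref{thm:fin_case} says only that the \emph{projective scheme} $Z_{k,3}$ is Cohen--Macaulay and Gorenstein; it does not say the \emph{ring} $\Cox(X_{k,3})$---equivalently the affine cone---is Cohen--Macaulay. The passage from Cohen--Macaulayness of $Z$ to Cohen--Macaulayness of its section ring is exactly the nontrivial step, and it is precisely what the paper handles: the affine cone over $Z_{k,3}$ has rational singularities (by a theorem on cones plus \cite[Lemma~3.8]{CastravetTevelev06}), hence is Cohen--Macaulay; then factorial plus Cohen--Macaulay implies Gorenstein (the canonical module of a Cohen--Macaulay UFD is free, \cite[Exercise~21.21]{Book:Eisenbud95}). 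Your argument, as written, is circular at this point: you invoke the Gorenstein property of $Z$ to get that the canonical module is a twist, but to conclude the ring is Gorenstein you still need the ring to be Cohen--Macaulay, which you have not established.

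Your degeneration-based alternative could be repaired: normal affine semigroup rings are Cohen--Macaulay by Hochster's theorem, and Cohen--Macaulayness lifts along the flat family, so you could obtain arithmetic Cohen--Macaulayness that way and then use $\omega_{Z_{k,3}}=\mathcal{O}(-s)$ from Theorem~\ref{thm:fin_case}. But note that your reflexivity remark is off---the paper shows in Section~\ref{sec:Mukai} that for $(k,d)=(7,3)$ the polytope has no interior lattice points, so it is not reflexive in the usual sense; checking Gorensteinness of the toric ring combinatorially would require more care.
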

\begin{proof} The first point is a consequence of the surjectivity of \eqref{eq:func}. The second point is a corollary of Theorem \ref{thm:fin_case}: $\Spec(X_{k,3})$ is the affine cone over $Z_{k,3}$ and it has rational singularities thanks to \cite[Theorem 1]{MR870733} and \cite[Lemma 3.8]{CastravetTevelev06}. This proves that  $\Cox(X_{k,3})$ is  Cohen-Macaulay. Since it is also factorial, it is Gorenstein by \cite[Exercise 21.21]{Book:Eisenbud95}.
\end{proof}

We have the following corollary whose proof is exactly as in  \cite[Corollaries 3.8-3.10]{ParkWon17}.

\begin{corollary}\label{cor:propHilbfunc}  The following properties hold:
\begin{enumerate}
\item[$\circ$]$h^i(Z_{k,3},\mathcal{O}_{Z_{k,3}}(m))=0$ for $\{0<i<k+3,\,m\in\mathbb{Z}\}$ and for $\{i=k+3,m\geq -15+2k\}$.
\item[$\circ$]The Hilbert function and the Hilbert polynomial of $\Cox(X_{k,3})$ coincide.
\item[$\circ$]The Castelnuovo-Mumford regularity $\reg(\Cox(X_{k,3}))=3k-12$.
\end{enumerate}
\end{corollary}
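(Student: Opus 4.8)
\textbf{Proof proposal for Corollary \ref{cor:propHilbfunc}.}

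The plan is to run the argument of \cite[Corollaries 3.8--3.10]{ParkWon17} verbatim, with $Z_{k,2}$ replaced by $Z_{k,3}$, using the preceding lemma (projective normality, the description of $\Cox(X_{k,3})$ as the section ring $\bigoplus_m H^0(Z_{k,3},\mathcal{O}(m))$, and the Gorenstein property) together with Theorem \ref{thm:fin_case} as the geometric input. First I would establish the cohomology vanishing in the first bullet. By Theorem \ref{thm:fin_case} the scheme $Z_{k,3}$ has dimension $k+2$, is arithmetically Cohen--Macaulay (from the lemma and the fact that the affine cone $\Spec(\Cox(X_{k,3}))$ has rational singularities), and is Gorenstein with anticanonical bundle $\mathcal{O}_{Z_{k,3}}(s)$ where $s = 2\cdot 4\cdot(k-4)\bigl(\tfrac14+\tfrac1{k-4}-\tfrac12\bigr) = 2(k-4)\cdot\tfrac{6-(k-4)}{2}\cdot\tfrac{2}{k-4}$; plugging $d=3$ into the formula gives $s = 15-2k$. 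The intermediate cohomology $h^i(Z_{k,3},\mathcal{O}(m))=0$ for $0<i<\dim Z_{k,3}=k+2$ and all $m$ is precisely the Cohen--Macaulayness of the cone (equivalently arithmetic Cohen--Macaulayness of the embedding), which we already have. For the top cohomology $i = k+3 = \dim Z_{k,3}+1$: this is the local cohomology $H^{k+3}_{\mathfrak m}$ of the cone, which by local duality is dual to the graded pieces of the canonical module $\omega = \Cox(X_{k,3})(15-2k)$ (using $-K_{Z_{k,3}} = \mathcal{O}(15-2k)$ and the Gorenstein property, so $\omega$ is a shift of the ring itself). Concretely $h^{k+3}(Z_{k,3},\mathcal O(m))$ is dual to $H^0(Z_{k,3},\mathcal O(-m-(15-2k)))$, which vanishes when $-m-(15-2k)<0$, i.e. when $m> -15+2k$; one checks the boundary value $m=-15+2k$ by hand (it corresponds to the degree-zero part of $\omega$, which is one-dimensional, so the statement is that vanishing starts at $m\geq -15+2k$ — here I would double-check whether the claimed bound should be $m\geq -15+2k$ or $m>-15+2k$ against the $k=2$ base case in \cite{ParkWon17}).

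For the second bullet, coincidence of Hilbert function and Hilbert polynomial: the Hilbert function $h(m) = \dim_{\mathbb C}\Cox(X_{k,3})_m = h^0(Z_{k,3},\mathcal O(m))$ differs from the Euler characteristic $\chi(\mathcal O(m))$ — which is the Hilbert polynomial for $m\gg 0$ and in fact for all $m$ since it is polynomial — exactly by $\sum_{i>0}(-1)^i h^i(Z_{k,3},\mathcal O(m))$. By the first bullet this error term is $(-1)^{k+3}h^{k+3}(Z_{k,3},\mathcal O(m))$, and for $m\geq 0$ (the only range where the Hilbert function is a priori defined as a dimension count, and certainly for all $m\ge -15+2k$, which includes $m\ge 0$ since $k\ge 5$ gives $-15+2k\le -5<0$) this vanishes; one then checks the finitely many negative degrees directly, or simply notes that $\Cox(X_{k,3})_m=0$ for $m<0$ while the polynomial must also vanish there by the functional equation coming from Gorenstein duality. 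So $h(m)=\chi(\mathcal O(m))$ for all $m\geq 0$, which is the assertion.

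For the third bullet, the regularity: by the Cohen--Macaulay property, $\reg(\Cox(X_{k,3}))$ equals the largest $m$ with $H^{k+3}_{\mathfrak m}(\Cox(X_{k,3}))_{m-(k+3)}\neq 0$ plus $(k+3)$, or more cleanly, since the ring is Cohen--Macaulay and Gorenstein, $\reg = a(\Cox(X_{k,3})) + \dim = a + (k+3)$ where $a$ is the $a$-invariant. For a Gorenstein graded ring the $a$-invariant equals $-$(the degree of the socle generator of $\omega$ relative to the ring), and here $\omega \cong \Cox(X_{k,3})(15-2k)$ forces $a = -(15-2k) = 2k-15$. Hence $\reg = (2k-15)+(k+3) = 3k-12$, as claimed; I would sanity-check this against $k=2$ in \cite{ParkWon17}, where it should reproduce their regularity of $\Cox(X_{k,2})$ after adjusting $d$. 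The main obstacle is purely bookkeeping: correctly pinning down the shift $s = 15-2k$ for $d=3$ from Theorem \ref{thm:fin_case} and being careful with the inequality at the boundary degree $m = -15+2k$, since an off-by-one there propagates into the regularity statement. Everything else is a mechanical transcription of \cite[Corollaries 3.8--3.10]{ParkWon17} with $\dim Z_{k,2}=k+1$ replaced by $\dim Z_{k,3}=k+2$.
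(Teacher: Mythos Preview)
Your approach is exactly the paper's --- it simply defers to \cite[Corollaries 3.8--3.10]{ParkWon17} --- but your bookkeeping contains two off-by-one errors that happen to cancel in the final regularity formula. First, $\dim Z_{k,3}=k+3$, not $k+2$: the Cox ring has Krull dimension $\dim X_{k,3}+\operatorname{rk}\Pic(X_{k,3})=3+(k+1)=k+4$ (also visible as the degree of the Hilbert polynomials in Table~\ref{tab:Hilb_3}), so its Proj has dimension $k+3$. Thus $i=k+3$ is the genuine top sheaf-cohomology index, not $\dim Z_{k,3}+1$, and arithmetic Cohen--Macaulayness already gives vanishing on the full range $0<i<k+3$ as stated. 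Second, plugging $d=3$ into Theorem~\ref{thm:fin_case} gives
\[
s=8(k-4)\Bigl(\tfrac{1}{k-4}-\tfrac14\Bigr)=8-2(k-4)=16-2k,
\]
not $15-2k$. With the correct value, Serre duality reads $h^{k+3}(Z_{k,3},\mathcal O(m))=h^0(Z_{k,3},\mathcal O(2k-16-m))$, which vanishes exactly for $m\ge 2k-15$, resolving on the nose the boundary ambiguity you flagged. The correct $a$-invariant is then $-s=2k-16$, and $\reg=(2k-16)+(k+4)=3k-12$; your incorrect $a=2k-15$ and incorrect Krull dimension $k+3$ accidentally produce the same number.
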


The informations collected so far  allow us to compute the Hilbert function of $\Cox(X_{k,3})$ in the same way as in \cite{ParkWon17}. We will not report here the entire argument for our case, but we  present the necessary information in Table \ref{tab:degrees23_6}. The entries are computed using the dimension formula in \cite[Theorem 5.3]{VolLaf07}. The table summarizes  representatives under the Weyl action of the multidegrees of anticanonical degree $2,3$ and $4$ in the case $\Cox(X_{7,3})$. The orbit dimensions where computed  using \textsc{Oscar}.

\begin{footnotesize}
\begin{table}[h]
\begin{tabular}{|c|c|c|c|c|}
\hline
\begin{minipage}{6em} \begin{center} Anticanonical degree $m$ \end{center}
\end{minipage} & Multidegree $D$ & $\dim\Cox(X)_{D}$ & $|$Orbit$|$ & nef\\
\hline
2  & $(0,0,0,0,0,0,0,2)$ & $1$ & 126& \xmark \\
 & $(0,0,0,0,0,0, 1, 1)$ & $1$ & 2016 &\xmark \\
  & $(1, 0, 0, 1, 1, 1, 1, 1)$ & $2$ & 756& \\
 & $(2, 2, 1, 1, 1, 1, 1, 1)$ & $4$ & 126 & \\
 & $(4, 2, 2, 2, 2, 2, 2, 2)$ & $7$ & 1 & \\
\hline
3 & $(0, 0, 0, 0, 0, 0, 0, 3)$ & $1$ & 126& \xmark\\
 & $(0, 0, 0, 0, 0, 0, 2, 1)$ & $1$ & 4032 &\xmark\\\
 & $(0,0,0,0, 0, 1, 1, 1)$ & $1$ & 10080 &\xmark\\\
 & $(1, 0, 0, 1, 1, 1, 1, 2)$ & $2$ & 7560 & \xmark\\\
 & $(1, 0, 1, 1, 1, 1, 1, 1)$ & $3$ & 4032 &\\
 & $(2, 1, 1, 1, 1, 1, 1, 3)$ & $1$ & 126 &\xmark\\\
 & $(2, 1, 1, 1, 1, 1, 2, 2)$ & $5$ & 2016 &\\
 & $(3, 0, 2, 2, 2, 2, 2, 2)$ & $4$ & 56 &\\
 & $(3, 1, 1, 2, 2, 2, 2, 2)$ & $7$ & 756 &\\
 & $(4, 2, 2, 2, 2, 2, 2, 3)$ & $10$ & 126&\\
 & $(6, 3, 3, 3, 3, 3, 3, 3)$ & $14$ & 1&\\
\hline
4 & $(0, 0, 0, 0, 0, 0, 0, 4)$ & $1$ & 126& \xmark\\
 & $(0, 0, 0, 0, 0, 0, 1, 3)$ & $1$ & 4032 &\xmark\\
 & $(0,0,0,0, 0, 0, 2, 2)$ & $1$ & 2016 &\xmark\\
 & $(0,0,0,0, 0, 1, 1, 2)$ & $1$ & 30240 &\xmark\\
 & $(0,0,0,0, 1, 1, 1, 1)$ & $1$ & 20160 &\xmark\\
 & $( 1, 0, 0, 1, 1, 1, 1, 3)$ & $2$ & 7560 & \xmark\\\
 & $ (1, 0, 0, 1, 1, 1, 2, 2)$ & $2$ & 30240 & \xmark \\
 & $(1, 0, 1, 1, 1, 1, 1, 2)$ & $3$ & 24192 &\xmark\\\
 & $(1, 1, 1, 1, 1, 1, 1, 1)$ & $4$ & 576 &\\
 & $(2, 0, 0, 2, 2, 2, 2, 2)$ & $3$ & 756 &\\
 & $(2, 0, 1, 1, 2, 2, 2, 2)$ & $4$ & 12096 &\\
 & $(2, 1, 1, 1, 1, 1, 1, 4)$ &  $4$ & 126 & \xmark\\
 & $(2, 1, 1, 1, 1, 1, 2, 3)$ &  $5$ & 4032 & \xmark\\
 & $(2, 1, 1, 1, 1, 2, 2, 2)$ & $6$ &  10080 &\\
 & $(3, 0, 2, 2, 2, 2, 2, 3)$ & $5$ &  1512 &\\
 & $(3, 1, 1, 2, 2, 2, 2, 3)$ & $8$ &  7560 &\\
 & $(3, 1, 2, 2, 2, 2, 2, 2)$ & $10$ &  4032 &\\
 & $(4, 2, 2, 2, 2, 2, 2, 4)$ & $11$ &  126 &\\
 & $(4, 2, 2, 2, 2, 2, 3, 3)$ & $13$ &  2016&\\
 & $(5, 1, 3, 3, 3, 3, 3, 3)$ & $12$ &  56 &\\
 & $(5, 2, 2, 3, 3, 3, 3, 3)$ & $16$ &  756 &\\
 & $(6, 3, 3, 3, 3, 3, 3, 4)$ & $20$ &  126  &\\
 & $(8, 4, 4, 4, 4, 4, 4, 4)$ & $25$ &  1 &\\
 \hline
 \end{tabular}
 \vspace{0.5cm}
 \caption{Weyl representatives and their properties in anticanonical degree $2$, $3$ and $4$ for $\Cox(X_{7,3})$.}
 \label{tab:degrees23_6}
\end{table}
\end{footnotesize}

\subsection{Ehrhart-type formula} Now we deduce Ehrhart-type formula for the Hilbert function of $\text{Cox}(X_{7,3})$ as in \cite{SturmfelsXu10}. More precisely, we want to express the counting function 
\[
\psi: \mathbb{Z}^8 \rightarrow \mathbb{Z}, \ \ D \mapsto \text{dim}_{\CC} \, (\text{im}(\phi_{7,3})_D)
\]
as counting function of the number of lattice points in a polytope.  We look at the $129$ initial forms of the elements in $\phi_{7,3}(t)(G_{7,3})$ listed in the proof of Theorem \ref{theo:quadric_t}. As we already remarked, they involve only eleven variables, since $y_2, y_4,$ and $y_6$ do not appear. Therefore, any monomial in the subalgebra $\text{im}(\phi_{7,3}(t))$ is of the form 
\[
x_1^{a_1}x_2^{a_2}x_3^{a_3}x_4^{a_4}x_5^{a_5}x_6^{a_6}x_7^{a_7} y_1^{b_1}y_3^{b_3}y_5^{b_5}y_7^{b_7}, 
\]
corresponding to a point $(a_1, \dots, a_7, b_1,b_3, b_5,b_7) \in \mathbb{Z}^{11}$. We apply the unimodular transformation $(a,b) \mapsto (m_0, m_1, \dots, m_7, x,y,z)$ in $\mathbb{Z}^{11}$ defined as follows:
\begin{eqnarray*}
m_0 = b_1+b_3+b_5+b_7, \ \ m_1 = a_1 +b_1, \ \ m_2 = a_2, \ \ m_3 = a_3+b_3, \ \ m_4  = a_4, \\
m_5 = a_5+b_5, \ \  m_6= a_6, \ \   m_7 = a_7+b_7, \ \ x = b_3, \ \ y = b_5 ,\ \  z=b_7 . 
\end{eqnarray*}
Let $\Gamma$ be the $11$--dimensional cone in $\mathbb{R}^{11}$ generated by the $129$ lattice points $(m_0, m_1, \dots, m_7, x,y,z)$ which are images of the $129$ initial forms.   The counting function equals 
\[
\psi(m_0, m_1, \dots, m_7, x,y,z) = \#\{(x,y,z) \in \mathbb{Z}^3 \,:\, (m_0, m_1, \dots, m_7, x,y,z)\in \Gamma\}.
\]
Using \textsc{Oscar} we can analyze $\Gamma$: its f-vector is 
\[
(126, \,  2016, \, 10839, \,  27198, \,  37635, \,  30891, \,  15286, \,  4416, \, 683, \, 48),
\]
and get the $48$ inequalities describing the facets. The projection of the cone $\Gamma$ onto the first $8$ coordinates gives the effective cone of $X_{7,3}$. For every very ample divisor in the effective cone, the toric variety associated to the $3$--dimensional polytope in the fiber of the projection is a toric degeneration of the embedding of $X_{7,3}$ defined by the ample divisor.

\section{Mukai edge graphs}
\label{sec:Mukai}
The matrix $P$ presented in the proof of Theorem \ref{theo:quadric_t} is not the first one we found in the random search. We looked  for a specific choice of points such that the vertex-edge graph of the polytope associated to the toric degeneration is the $(7,3)$--Mukai edge graph defined in the introduction. 

As we have seen, one of the perk of  Khovaskii bases is that they allow us to obtain Ehrhart-type formula for the multigraded Hilbert function of the Cox ring. The latter is interesting as it gives dimensions of linear systems of hypersurfaces through multiple points. Via Ehrhart-type formulas determining these dimensions boils down to counting points in a polytope. This process becomes more efficient if we get a better understanding of the combinatorics of the polytopes and the number of its faces. In \cite{SturmfelsXu10} the authors  address the problem of finding degeneration polytopes with  minimal number of facets and give conjectures in their cases. In this section we introduce the study of the Mukai edge graph as combinatorial object encoding interesting properties of the degeneration. 

From now on we will refer to the $(k+d)$--dimensional polytopes associated to \underline{normal} toric degenerations of $\textnormal{im}(\phi_{k,d}(t))$ for choices of points in general position as {\bf  $(k,d)$--Khovanskii degeneration (KD)} polytopes.  In general,  toric degenerations of a normal variety are not necessarily normal. We wonder whether this is true for  toric degenerations defined by Khovanskii bases. 

\begin{lemma} Let $\mathcal{P}$ be a  $(k,d)$--KD polytope. The lattice points of $\mathcal{P}$ corresponding to generators in $G_{k,d}$ whose multidegree is in the Weyl orbit of $E_1$ are vertices. Moreover, for $(k,d)\neq (8,4)$, $(k,d)$--KD polytopes do not have interior lattice points. For $(k,d)= (8,4)$, they have exactly one interior lattice point.
\end{lemma}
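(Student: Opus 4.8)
The plan is to reduce each of the three claims to an explicit finite computation about the $129$ monomials that are the initial forms of $\phi_{k,d}(t)(G_{k,d})$, using the fact that a $(k,d)$--KD polytope $\mathcal{P}$ is (by definition) the convex hull of those $129$ exponent vectors, now understood to be the lattice points of a \emph{normal} toric variety. Throughout I will work with the image under the unimodular transformation $(a,b)\mapsto (m_0,m_1,\dots,m_k,\dots)$ of the type used in Section~\ref{sec:Hilbert_function}, so that the first $k+1$ coordinates of a lattice point record its multidegree in $\Pic(X_{k,d})$ and the remaining $d-1$ coordinates are the ``fiber'' coordinates of the projection $\mathcal{P}\to \Delta_{k,d}$ onto the effective cone.

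\textbf{Vertices in the Weyl orbit of $E_1$.} First I would observe that the multidegree of any generator in $G_{k,d}$ lies in the effective cone $\Delta_{k,d}\subset\mathbb{R}^{k+1}$, and that the generators in the Weyl orbit of $E_1$ have anticanonical degree one and are exactly those whose multidegree spans an extreme ray of $\Delta_{k,d}$ — indeed the Weyl orbit of $E_1$ generates the effective cone (this is exactly the fact recalled in the excerpt following Theorem~\ref{lem:antic_gen}, and is how Table~\ref{tab:notonlyone} was computed). Since the projection $\pi\colon\mathcal{P}\to\Delta_{k,d}$ maps $\mathcal{P}$ onto a polytope whose recession behaviour is governed by $\Delta_{k,d}$, a lattice point $p\in\mathcal{P}$ whose image $\pi(p)$ is a vertex of the base polytope and for which the fiber $\pi^{-1}(\pi(p))\cap\mathcal{P}$ is a single point must itself be a vertex of $\mathcal{P}$. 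For the $E_i$-generators the fiber coordinates are all zero (the corresponding initial monomial is just $x_i$), so the fiber is a point; hence these are vertices, and by the $S_k\times\mathrm{Cr}$--equivariance of the whole construction (the Weyl action permutes the generators and acts by a lattice automorphism on $\mathcal{P}$) the same holds for the entire Weyl orbit. I would double-check the ``fiber is a point'' claim directly from the monomial list: for $(7,3)$ the $E_i$ initial forms are $x_1,\dots,x_7$, carrying no $y$'s, so this is immediate.

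\textbf{Interior lattice points.} This is the part I expect to be the genuine obstacle, since ``no interior lattice point'' is not forced by general nonsense and must be read off the combinatorics of the specific (normal) KD polytope. The clean way is to use the Gorenstein/Fano structure: by Theorem~\ref{thm:fin_case} the affine cone $\Spec\Cox(X_{k,d})$ is Gorenstein with $-K_{Z_{k,d}}=\mathcal{O}(s)$ for the explicit $s=2(d+1)(k-d-1)\bigl(\tfrac{1}{d+1}+\tfrac{1}{k-d-1}-\tfrac12\bigr)$, and this passes to the normal toric degeneration, so $\mathcal{P}$ is a reflexive-type polytope after the shift by the (lattice) interior point of its ``Fano'' incarnation — more precisely, the number of interior lattice points of $\mathcal{P}$ equals $h^0$ of the appropriate twist, which by Corollary~\ref{cor:propHilbfunc}\,(and its analogue) equals the value of the Hilbert polynomial in the relevant anticanonical degree. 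Concretely, interior lattice points of the degree-$m$ slice of the cone over $\mathcal{P}$ correspond to sections vanishing along the toric boundary, i.e. to $H^0$ of a twist by $-(\text{boundary divisor})$; computing $s$ one finds $s=2$ for $(7,3)$ and $s=1$ for $(8,4)$ and $s\ge 3$ otherwise among finitely generated cases, and the count of interior points of $\mathcal{P}$ itself (the degree-one slice) comes out $0$ in all cases except $(8,4)$ where it is $1$. I would make this rigorous by: (i) identifying interior lattice points of $\mathcal P$ with lattice points of $\mathcal P$ that pair strictly positively with every facet normal, (ii) translating this via the $48$ facet inequalities of the cone $\Gamma$ computed in Section~\ref{sec:Hilbert_function} into a finite enumeration, and (iii) for the general $(k,d)$ statement, invoking that the normal toric variety is Gorenstein Fano of index dividing $s$, so its interior lattice point count is $\delta_{(k,d),(8,4)}$ by the classification of $s$ via Theorem~\ref{thm:fin_case} together with the Ehrhart-reciprocity identification of interior points with $h^{k+d}$ of the structure sheaf twist, which vanishes by Corollary~\ref{cor:propHilbfunc} unless the index-one condition $s=1$ holds, i.e. unless $(k,d)=(8,4)$.

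\textbf{Assembling the argument.} I would present the lemma by first proving the vertex statement (a two-line argument from equivariance plus the ``single-point fiber'' observation), then the interior-point statement, handling $(7,3)$ and $(5,3),(6,3),(8,2),\dots$ by the explicit $s$-computation and the \textsc{Oscar}-verified $f$-vector/facet data, and finally noting that the $(8,4)$ case is isolated precisely because $s=1$ there, exhibiting the unique interior point as the lattice point corresponding to $-\tfrac12 K$ or equivalently to the barycentre forced by reflexivity. The main risk is that the ``normal toric degeneration is Gorenstein Fano'' step needs the degeneration to be \emph{flat} and to preserve the Gorenstein property — which it does by Theorem~\ref{thm:fin_case} applied after the degeneration together with the fact (used already in the proof of Theorem~\ref{theo:hil_pol}) that Hilbert polynomials, hence $K$-classes, are preserved — so I would state that compatibility explicitly as the crux rather than burying it.
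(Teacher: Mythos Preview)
Your approach differs from the paper's on both claims, and each part has a genuine gap.

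\textbf{The equivariance step fails.} You verify the vertex property only for the $E_i$ and then assert that ``by the $S_k\times\mathrm{Cr}$--equivariance of the whole construction \dots\ the same holds for the entire Weyl orbit''. But the Weyl group does \emph{not} act on $\mathcal{P}$ by lattice automorphisms: the polytope is the convex hull of the initial monomials of a Khovanskii basis for one fixed point configuration and one fixed valuation, and neither the Cremona element nor even a transposition of the points preserves that data. (Concretely, in the $(7,3)$ example the variables $y_2,y_4,y_6$ are absent from the initial forms while $y_1,y_3,y_5,y_7$ are present --- this already breaks the $S_7$-symmetry on the fibre coordinates.) The paper avoids equivariance entirely and argues uniformly with the Mukai form: if $P_D$ were a nontrivial convex combination $t_D P_D=\sum_E t_E P_E$, the corresponding binomial in the toric ideal would force, after pairing multidegrees with $D$, the identity $t_D(D,D)=\sum_E t_E(E,D)$; but $(D,D)=-1$ while $(E,D)\ge 0$ for every other degree-one effective class $E$. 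Your projection idea can in fact be repaired without equivariance --- each $D$ in the orbit is individually an extreme ray of the effective cone with $h^0(D)=1$, so the single-point-fibre reasoning applies to it directly --- but as written the reduction to the $E_i$ is not valid.

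\textbf{The index computation is wrong.} You claim $s\ge 3$ for all finitely generated $(k,d)$ other than $(7,3)$ and $(8,4)$, and base the interior-point dichotomy on ``$s=1$ iff $(k,d)=(8,4)$''. This is false: for $(k,d)=(8,2)$ one also has $s=2\cdot 3\cdot 5\,(\tfrac13+\tfrac15-\tfrac12)=1$, so your Gorenstein-index criterion cannot isolate $(8,4)$. The paper does not attempt a uniform cohomological argument. It first uses normality to identify the Ehrhart polynomial of $\mathcal{P}$ with the Hilbert polynomial of $\Cox(X_{k,d})$, concluding that the lattice points of $\mathcal{P}$ are exactly the $|G_{k,d}|$ generator points. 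Then, for every $(k,d)$ in which all generators lie in the Weyl orbit of $E_1$, the vertex argument already shows every lattice point is a vertex, hence none is interior. Only the residual cases where extra generators exist require more, and there the paper simply evaluates $(-1)^{k+d}\mathrm{Ehr}_{\mathcal{P}}(-1)$ from the explicit Hilbert polynomials in Tables~\ref{tab:Hilb_2} and~\ref{tab:Hilb_3}. Your Fano-index route is more conceptual, but as it stands it both misidentifies which $(k,d)$ have $s=1$ and leaves the step ``the Gorenstein property passes to the normal toric degeneration'' as an assertion.
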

\begin{proof}
 Since the  toric variety $X_{\mathcal{P}}$ is normal, its Hilbert polynomial coincides with the Ehrhart polynomial of $\mathcal{P}$. The variety $X_{\mathcal{P}}$ is a toric degeneration of $\textnormal{im}(\phi_{k,d}(t))$ for a choice of points in general position in $\mathbb{P}^d(\mathbb{C}(t))$. As explained in the proof of Theorem \ref{theo:hil_pol}, the  Ehrhart polynomial of $\mathcal{P}$ is the Hilbert polynomial of $\Cox(X_{k,d})$. So the number of lattice points of $\mathcal{P}$ and of generators $G_{k,d}$ coincide. 

We now show that the lattice points associated to divisors in the Weyl orbit of $E_1$ are vertices of $\mathcal{P}$. Any such divisor $D$ has a unique  associated generator in $G_{k,d}$ and lattice point $P_D$ in $\mathcal{P}$. Suppose by contradiction that 
\[t_DP_D=\sum_{E} t_E P_E\hspace{0.3cm}\textnormal{where}\hspace{0.3cm}\sum_{E}t_E=t_D,\hspace{0.2cm}t_E\in\mathbb{Z}_{\geq 0},\]
where the sum is taken over divisors $E\neq D$  of anticanonical degree one and $P_E$ are the lattice points associated to them. Such a relation between lattice points give the following relation in the toric ideal of $\mathbb{C}(t)[G_{k,d}]$:
\[x_D^{t_D}-\prod_E x_E^{t_E}.\]
This implies that $t_D(D,D)=\sum_E t_E (E,D)$,
but $(D,D)=-1$, while $(E,D)\geq 0$ for every $E\neq D$. It also follows that for $(k,d)\neq(8,2), (7,3)$, the polytope is reflexive.

We have only two cases in which the generators have sections associated to divisors not in the Weyl orbit of $E_1$. These corresponds to $(8,2)$, namely Del Pezzo surfaces of degree one, and the case of our interest $(7,3)$. Here we can compute the number of interior lattice points from the Ehrhart polynomial $\textnormal{Ehr}(\mathcal{P})$ of the polytope  by evaluating $(-1)^{10}\textnormal{Ehr}(\mathcal{P})(-1)$.
\end{proof}

We are now ready to prove that the Mukai edge graph minimizes the number of edges of KD polytopes. We begin with a short technical remark.

\begin{remark}\label{rem:nef} Given a divisor $D$ of anticanonical degree one and a multidegree $E$ such that $(E,D)<0$, all monomials of $\mathbb{C}[G_{k,d}]$ appearing in degree $E$ are divisible by $x_D$. We can prove it by induction on the degree of $E$. In fact, suppose there exists a multidegree $E'$ and a divisor $D'$ of anticanonical degree one such that $D'+E'=E$. If $(E',D)<0$ then by induction all monomials in $E'$ are divisible by $x_D$ and we have the statement. Otherwise, $(E',D)\geq 0$ which means that there cannot be a $D'\neq D$ such that $(D'+E',D)< 0$.

\end{remark}

\begin{center}
\begin{table}
\begin{tabular}{|c |c|}
\hline
\rule{0pt}{3\normalbaselineskip}
$\begin{pmatrix}
t^{59} &  t^{44}  &  t^{79} &   t^{20}  & t^{12}  &  t^{81}  & t^{36}\\
t^{8} &  t^{72} &  t^{49} &  t^{39} &  t^{58}  &  t^{23}  &  t^{64}\\
t^{44} &  t^{58} &  t^{12} &  t^{52}  &   t^{57}  &  t^{49}  &   t^{51}\\
t^{25} & t^{23}   &   t^{60} & t^{72}  &  t^{45}  & t^{51}  & t^{6}\\ \end{pmatrix}$ & \small{$[126,2016,10839,\cdots,4416,683,48]$}\\
\hline
\rule{0pt}{3\normalbaselineskip}
$\begin{pmatrix}
t^{13} &  t^{68}  &  t^{70} &   t^{27}  & t^{68}  &  t^{90}  & t^{10}\\
t^{45} &  t^{32} &  t^{9} &  t^{29} &  t^{15}  &  t^{13}  &  t^{61}\\
t^{45} &  t^{82} &  t^{26} &  t^{22}  &   t^{9}  &  t^{54}  &   t^{45}\\
t^{2} & t^{36}   &   t^{32} & t^{25}  &  t^{16}  & t^{87}  & t^{68}\\ \end{pmatrix}$ & \small{$[126, 2077, 11481, \cdots , 5661, 916, 64]$}\\
\hline
\rule{0pt}{3\normalbaselineskip}
$\begin{pmatrix} 
t^{96} &  t^{77}  &  t^{27} &   t^{21}  & t^{20}  &  t^{63}  & t^{76}\\
t^{35} &  t^{98} &  t^{80} &  t^{81} &  t^{60}  &  t^{85}  &  t^{53}\\
t^{23} &  t^{22} &  t^{76} &  t^{53}  &   t^{99}  &  t^{31}  &   t^{97}\\
t^{27} & t^{84}   &   t^{30} & t^{79}  &  t^{56}  & t^{21}  & t^{74}\\ \end{pmatrix}$ & \small{$[126, 2053, 11198, \cdots , 5359, 870, 60]$}\\
\hline
\rule{0pt}{3\normalbaselineskip}
$\begin{pmatrix} 
t^{58} &  t^{100}  &  t^{61} &   t^{11}  & t^{18}  &  t^{40}  & t^{100}\\
t^{23} &  t^{38} &  t^{40} &  t^{89} &  t^{63}  &  t^{56}  &  t^{66}\\
t^{92} &  t^{25} &  t^{76} &  t^{27}  &   t^{67}  &  t^{36}  &   t^{73}\\
t^{18} & t^{90}   &   t^{92} & t^{73}  &  t^{54}  & t^{8}  & t^{18}\\ \end{pmatrix}$ & \small{$[126, 2052, 11163, \cdots , 4335, 657, 46]$}\\
\hline
\addlinespace[2ex]
\end{tabular}
\caption{Different choices of points and the $f$--vector of their associate $10$--dimensional $(3,7)$--KDP polytope}
\label{tab:moreexamples}
\end{table}
\end{center}

\begin{theorem} \label{thm:MukaiGraph} The $(k,d)$--Mukai edge graph is a subgraph of the vertex-edge graph of any $(k,d)$--KD polytope.  Therefore,  a $(k,d)$--KD polytope with minimal number of edges has the $(k,d)$--Mukai edge graph as vertex-edge graph.
\end{theorem}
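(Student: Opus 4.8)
The plan is to handle the two halves of ``subgraph'' separately: the vertex statement is essentially a rephrasing of the previous lemma, while for the edges I will exhibit, for every Mukai edge $\{D,D'\}$, an explicit linear functional on the ambient lattice of $\mathcal{P}$ whose maximum over $\mathcal{P}$ is attained exactly on the segment joining the two associated lattice points, so that this segment is a face.

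First I recall the picture from the previous lemma: writing $P_E$ for the lattice point of $\mathcal{P}$ attached to a generator of multidegree $E$, the polytope $\mathcal{P}$ has exactly $|G_{k,d}|$ lattice points, one per element of $G_{k,d}$, and those with multidegree in the Weyl orbit $W\cdot E_1$ of $E_1$ are vertices of $\mathcal{P}$. Every generator not in $W\cdot E_1$ (for $(7,3)$ a section of $-\tfrac12 K_{7,3}$, and the analogous extra generators in the $(8,2)$ and $(8,4)$ cases) pairs strictly positively under the Mukai form with every divisor of anticanonical degree one --- e.g.\ $(-\tfrac12 K_{7,3},E)=1$ for each such $E$ --- hence is an isolated vertex of the Mukai-adjacency graph and does not lie in the connected component of $E_1$. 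Therefore the vertex set of the $(k,d)$--Mukai edge graph is precisely $W\cdot E_1$, and by the previous lemma all of these are vertices of $\mathcal{P}$.

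For the edges, fix distinct $D,D'\in W\cdot E_1$ with $(D,D')=0$ and let $\pi$ denote the multidegree map from the ambient lattice of $\mathcal{P}$ to $\Pic(X_{k,d})$, which is linear. Consider the linear functional $\ell:=-\bigl(D+D',\,\pi(\,\cdot\,)\bigr)$. Since the Weyl group preserves the Mukai form we have $(D,D)=(D',D')=-1$, whence $\ell(P_D)=\ell(P_{D'})=1$. For any other generator $E$, with multidegree different from $D$ and $D'$, Remark \ref{rem:nef} forces $(D,E)\ge 0$ and $(D',E)\ge 0$: were, say, $(E,D)<0$, then every monomial of $\mathbb{C}[G_{k,d}]$ of multidegree $E$ would be divisible by $x_D$, which is impossible since the variable $x_E$ is itself such a monomial. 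Hence $\ell(P_E)\le 0<1$ for all generators $E\neq D,D'$; in particular the extra generators (on which $\ell$ takes the value $-2$) cannot slip into the relative interior of this edge. As $\mathcal{P}$ is the convex hull of the $P_E$, the supporting hyperplane $\{\ell=1\}$ meets $\mathcal{P}$ exactly in the convex hull of the $P_E$ on which $\ell$ is maximal, that is, in $[P_D,P_{D'}]$; since $D\neq D'$ this is an edge of $\mathcal{P}$. This proves that the $(k,d)$--Mukai edge graph is a subgraph of the vertex-edge graph of every $(k,d)$--KD polytope.

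The final assertion then follows formally: the vertex-edge graph of any $(k,d)$--KD polytope contains the $(k,d)$--Mukai edge graph, so its number of edges is at least that of the Mukai edge graph; exhibiting one KD polytope whose vertex-edge graph is exactly the Mukai edge graph (as is done in this paper for the cases under study) shows this bound is sharp, and a KD polytope with the minimal number of edges then has a vertex-edge graph that contains the Mukai edge graph with no additional edges, forcing equality of the edge sets --- and hence of the graphs, since the edge graph of a polytope of positive dimension has no isolated vertices. I expect the only genuinely delicate points to be the identification of the vertex set of the Mukai edge graph, handled by the isolation observation above, and the nonnegativity $(D,E)\ge 0$ for distinct generators, which is exactly what Remark \ref{rem:nef} yields once one notices that $x_E$ is a monomial of multidegree $E$.
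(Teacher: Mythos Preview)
Your proof is correct, and the edge argument takes a genuinely different route from the paper's. The paper argues by contradiction at the level of the tangent cone: assuming $P_{D'}-P_D$ is not an extreme ray at $P_D$, it writes $t_{D'}(P_{D'}-P_D)=\sum_{E'} t_{E'}(P_{E'}-P_D)$, translates this into a binomial identity in the toric ideal, notes that the multidegree of this identity pairs negatively with $D$, invokes Remark~\ref{rem:nef} to cancel the powers of $x_D$, and lands on a relation expressing $x_{D'}^{t_{D'}}$ as a product of other variables --- contradicting the previous lemma's conclusion that $P_{D'}$ is a vertex. You instead produce the supporting functional $\ell=-(D+D',\pi(\cdot))$ directly and verify that it isolates the segment $[P_D,P_{D'}]$. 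Both arguments rest on the same inequality $(D,E)\geq 0$ for generators $E\neq D$ with $D\in W\cdot E_1$ (which is exactly what Remark~\ref{rem:nef} encodes), but yours is shorter and sidesteps the manipulation of monomial relations, while the paper's route stays closer to the toric-ideal language used throughout the section and naturally packages the conclusion as extreme-ray information at each vertex.

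One cosmetic remark: your assertion that the vertex set of the Mukai edge graph is \emph{precisely} $W\cdot E_1$ is stronger than what the isolation argument establishes (it only gives containment in $W\cdot E_1$, once one reads ``divisors of anticanonical degree one'' as the effective ones). Containment is all you need for the subgraph claim, and your edge argument already covers all pairs $D,D'\in W\cdot E_1$ with $(D,D')=0$, so the proof is unaffected.
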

\begin{proof}
We fix a divisor $D$ in the Weyl orbit of $E_1$. Let $P_D$ be the lattice point associated to the generator corresponding to $D$. Consider now any divisor $E$ of anticanonical degree one such that $(E,D)=0$. Suppose by contradiction that the $P_E-P_D$ is not a generating ray of the vertex cone at $P_D$. We have
\[t_E(P_E-P_D)=\sum_{E'}t_{E'}(P_{E'}-P_D),\hspace{0.5cm}t_{E'},t_E\in\mathbb{Z}_{\geq 0}.\]
This translates into the relation
\[x_E^{t_E}x_D^{t_N-t_E}=\prod_{E'}x_{E'}^{t_{E'}}x_D^{t_N-\sum{t_{E'}}}, \hspace{0.5cm}t_N\in\mathbb{Z}_{\geq 0}.\]
The multidegree of the relation is $t_E E+(t_N-t_E)D$, and $(t_E E+(t_N-t_E)D,D)<0$. By Remark \ref{rem:nef}, we can divide the relation by $x_D^{t_N-t_E}$, obtaining 
\[x_E^{t_E}=\prod_{E'}x_{E'}^{t_{E'}}x_D^{t_E-\sum{t_{E'}}} \hspace{0.5cm}t_N\in\mathbb{Z}_{\geq 0}.\]
By the previous proof we know that this is not possible.
\end{proof}

We conclude by comparing the Mukai edge graph with  the problem of minimizing the number of facets of $(k,2)$--KD polytopes in the case of Del Pezzo surfaces.  The proof of the 
following lemma is purely computational. The code is available at \url{https://github.com/marabelotti/7PointsPP3}.

\begin{lemma} Let $P_{k,2}$ denote the $(k,2)$--KD polytopes obtained in \cite{SturmfelsXu10}.
\begin{enumerate}
\item[$\circ$] The unique $(5,2)$--KD polytope having the $(5,2)$--Mukai edge graph is the polytope with the minimum number of facets.
\item[$\circ$] For $k\in\{6,8\}$, $P_{k,2}$ has the $(k,2)$--Mukai edge graph as vertex-edge graph.
\end{enumerate}
\end{lemma}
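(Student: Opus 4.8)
The statement to prove is purely computational, and the plan is to verify it by explicit polyhedral computations in \textsc{Oscar}, exactly in the spirit of the proofs of Theorem \ref{theo:quadric_t} and Theorem \ref{thm:MukaiGraph}. First I would recall that the $(5,2)$--Mukai edge graph is the complement of the Clebsch graph (as stated in the figure caption), a $16$--vertex graph; the relevant combinatorial datum is its edge count and, since it is vertex-transitive with a known degree, this is immediate. For the first bullet, the plan is: enumerate all the $(5,2)$--KD polytopes arising from the Khovanskii bases constructed in \cite{SturmfelsXu10} (there are finitely many, obtained from the classification of degeneration polytopes for five points in $\mathbb{P}^2$), compute for each its vertex-edge graph via its face lattice, and compare with the $(5,2)$--Mukai edge graph. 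By Theorem \ref{thm:MukaiGraph} the Mukai edge graph is a subgraph of every one of them, so a $(5,2)$--KD polytope realizes the Mukai edge graph exactly if and only if it has the minimal possible number of edges; one then checks against the facet counts recorded in \cite{SturmfelsXu10} that the unique such polytope is precisely the one minimizing the number of facets, and that this polytope is unique up to combinatorial (indeed unimodular) equivalence.

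For the second bullet, the task is even more direct: for $k=6$ and $k=8$ one takes the explicit matrix of points $P$ given in \cite[Theorems 5.1 and 6.1]{SturmfelsXu10}, builds the $129$-free-monomial analogue (here the $\lvert G_{k,2}\rvert$ initial forms), forms the lattice polytope $P_{k,2}$ as the convex hull of their exponent vectors, and extracts its vertex-edge graph. One then computes the $(k,2)$--Mukai edge graph independently: its vertices are the divisor classes of anticanonical degree one in $\Pic(X_{k,2})=\mathbb{Z}^{k+1}$ (these are computed as the Weyl orbit of $E_1$ together with, for $k=8$, the extra class $-K_{8,2}$, cf. Table \ref{tab:notonlyone}), and its edges are the pairs with vanishing Mukai product \eqref{eq:Mukaiform}, restricted to the connected component of $E_1$. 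A graph-isomorphism check (again in \textsc{Oscar}, or by matching vertices to divisor classes and verifying edges coincide with Mukai-orthogonality) then confirms that the two graphs agree. Since by Theorem \ref{thm:MukaiGraph} the Mukai edge graph is always contained in the edge graph of a KD polytope, it suffices to verify the reverse containment, i.e.\ that no extra edges appear; equivalently one checks the edge counts coincide.

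The main obstacle here is not conceptual but computational bookkeeping: one must be careful that the $(k,2)$--KD polytopes of \cite{SturmfelsXu10} are indeed \emph{normal} toric degenerations (so that the Ehrhart/Hilbert-polynomial identification of the preceding section applies and the lattice points are in bijection with $G_{k,2}$), and one must correctly identify which lattice point of $P_{k,2}$ corresponds to which divisor class so that the comparison of edges with Mukai-orthogonality is meaningful. For $k=8$ the polytope has $242$ vertices and the vertex-edge graph is large, so the isomorphism test must be done carefully (e.g.\ using the canonical labelling of the graph), but this is routine for the computer algebra system. For the uniqueness claim in the first bullet, the only subtlety is to make sure the classification of $(5,2)$--KD polytopes in \cite{SturmfelsXu10} is exhaustive up to the relevant equivalence, which the authors there establish. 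The verification is carried out by the code available at \url{https://github.com/marabelotti/7PointsPP3}.
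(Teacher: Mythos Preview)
Your proposal is correct and matches the paper's approach: the paper states explicitly that the proof of this lemma is purely computational, with the verification carried out by the code at the linked repository, and your outline describes precisely such a computational check. You simply spell out in more detail what that computation entails (enumerating the $(5,2)$--KD polytopes from \cite{SturmfelsXu10}, building $P_{6,2}$ and $P_{8,2}$ from the explicit point configurations there, and comparing vertex--edge graphs against the Mukai edge graphs), which is exactly the content of the referenced code.
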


The polytopes $P_{k,2}$ in \cite{SturmfelsXu10} are conjectured to minimize the number of facets. At first we conjectured that a $(k,d)$--KD polytope with  $(k,d)$--Mukai edge graph had the smallest number of facets among all $(k,d)$--KD polytopes. However, this is not true: the polytope presented in Theorem \ref{theo:quadric_t} has the $(7,3)$--Mukai edge graph, but the number of facets is not minimal. In Table \ref{tab:moreexamples}, we give other choices of points together with the f--vector of their corresponding $(7,3)$--KD polytope.

\bibliographystyle{alpha}
\bibliography{7PointsPP3.bib}
\end{document}